\numberwithin{equation}{section}
\theoremstyle{plain}
\newtheorem{thm}{Theorem}[section]
\theoremstyle{definition}
\newtheorem{exam}[thm]{Example}
\newcommand{\brac}[1]{\left(#1\right)}
\newcommand{\abs}[1]{\left\vert#1\right\vert}
\begin{document}
\baselineskip=1.6pc

\vspace{.5in}

\begin{center}

{\large\bf Machine learning moment closure models for the radiative transfer equation I: directly learning a gradient based closure}

\end{center}

\vspace{.1in}

\centerline{
Juntao Huang \footnote{Department of Mathematics,
Michigan State University, East Lansing, MI 48824, USA.
E-mail: huangj75@msu.edu} \qquad
Yingda Cheng  \footnote{Department of Mathematics, Department of Computational Mathematics, Science and Engineering, Michigan State University, East Lansing, MI 48824, USA. E-mail: ycheng@msu.edu. Research is supported by NSF grants    DMS-2011838 and AST-2008004.} \qquad
Andrew J. Christlieb \footnote{Department of Computational Mathematics, Science and Engineering, Michigan State University, East Lansing, Michigan 48824, USA. E-mail: christli@msu.edu. Research is supported by: AFOSR grants FA9550-19-1-0281 and FA9550-17-1-0394; NSF grants DMS-1912183 and AST-2008004; and DoE grant DE-SC0017955.} \qquad
Luke F. Roberts \footnote{National Superconducting Cyclotron Laboratory and Department of Physics and Astronomy, Michigan State University, East Lansing, MI 48824, USA. E-mail: robertsl@nscl.msu.edu. Research is supported by: NSF grant AST-2008004; and DoE grant DE-SC0017955.}
}

\vspace{.4in}

\centerline{\bf Abstract}
\vspace{.1in}

In this paper, we take a data-driven approach and apply machine learning to the moment closure problem for radiative transfer equation in slab geometry. Instead of learning the unclosed high order moment, we propose to directly learn the gradient of the high order moment  using neural networks. This new approach is consistent with the exact closure we derive for the free streaming limit and also provides a natural output normalization. 
A variety of benchmark tests, including the variable scattering problem, the Gaussian source problem with both periodic and reflecting boundaries, and the two-material problem, show both good accuracy and generalizability of our machine learning closure model.

\bigskip

\bigskip

{\bf Key Words: radiative transfer equation; moment closure; slab geometry; machine learning; neural network}

\pagenumbering{arabic}

\section{Introduction}\label{sec:intro}
\setcounter{equation}{0}
\setcounter{figure}{0}
\setcounter{table}{0}

The radiative transfer equation (RTE) describes  particle propagation and interaction with a background medium. It has been widely applied in many fields of science and engineering including astrophysics \cite{pomraning2005equations}, heat transfer \cite{koch2004evaluation}, remote sensing \cite{spurr2001linearized}, and medical imaging \cite{klose2002optical}. The RTE is a high-dimensional integro-differential kinetic equation. Common numerical methods for computing RTE can be classified into two categories: probabilistic methods such as the direct simulation Monte Carlo (DSMC) method \cite{bird1995molecular}, and deterministic schemes including the discrete ordinates method ($S_N$) \cite{larsen2010advances} and the moment method \cite{chandrasekhar1944radiative,levermore1996moment} among others. In general, any mesh based numerical discretization faces formidable computational cost due to the curse of dimensionality.

While there have been advances in solving high dimensional models, such as the RTE, using various approaches in dimensional and model reduction \cite{crockatt2017arbitrary,crockatt2019hybrid,guo2016sparse,guo2017adaptive,buchan2015pod,tencer2016reduced,peng2021reduced}, for a large class of problems, moment methods are the only tractable solution.   Moment methods study the evolution of a finite number of moments of the specific intensity. 
Typically, in this scenario, the evolution of the $p^{th}$ moment depends on the $(p+1)^{th}$ moment, leading to what is known as the moment closure problem.   Hence, one has to introduce suitable closure relations that relates the highest moment with the lower order moments in order to get a closed system of equations.  A given closure relation makes assumptions about micro-physics, which may not be true in all settings. Therefore, the trade off in introducing a closure relation and solving a moment model instead of a kinetic equation, such as the RTE, is generic accuracy verses practical computability.   Many moment closure strategies  have been developed. Some of the best known methods include: the $P_N$ model \cite{chandrasekhar1944radiative}; the variable Eddington factor models \cite{levermore1984relating,murchikova2017analytic}; the entropy-based $M_N$ models \cite{hauck2011high,alldredge2012high,alldredge2014adaptive};  the positive $P_N$ models \cite{hauck2010positive}; the filtered $P_N$ models \cite{mcclarren2010robust,laboure2016implicit,radice2013new}; the $B_2$ models \cite{alldredge2016approximating}; and the $MP_N$ model \cite{fan2020nonlinear,fan2020nonlinear2,li2021direct}. Newly developed theory around the generic closure problem   suggests  an approach to constructing  analytical closure models based on gradients that lead to globally hyperbolic moment models  \cite{li2021direct}.  This approach encompasses many of the well known closure models and offers insight into this challenging problem.

Recently, thanks to the rapid development of machine learning (ML) \cite{lecun2015deep} and data-driven modeling \cite{brunton2016discovering,raissi2019physics,han2018solving,zhang2018deep}, a new approach to solving the moment closure problem has emerged based on ML. In \cite{han2019uniformly}, the authors introduced a framework to construct machine learning moment closure models for kinetic problems. They first learned a set of generalized moments using the auto-encoder to optimally represent the underlying velocity distribution, and then learned the moment closure model for the generalized moments with the aim of best capturing the associated dynamics of the kinetic equation. This framework was further applied to the Williams-Boltzmann equation for polydisperse evaporating sprays in \cite{scoggins2021machine}. In \cite{huang2020learning}, based on the conservation-dissipation formalism \cite{zhu2015conservation} of irreversible thermodynamics, the authors proposed a stable closure model  parametrized by multilayer perceptron (MLP) for the Boltzmann BGK equation.  In \cite{bois2020neural}, a nonlocal closure was proposed for the Vlasov-Poisson system using a convolutional neural network (CNN). In \cite{ma2020machine,wang2020deep}, the authors applied MLP, CNN and a discrete Fourier transform (DFT) network to learn the well-known Hammett–Perkins Landau fluid closure. In \cite{maulik2020neural}, the capability of neural networks to reproduce some known magnetized plasma closures was further investigated. We also note that in addition to the closure problem, ML is being investigated as a method for directly solving high dimensional kinetic equations.  The physics informed neural networks (PINN) was applied to solve forward and inverse problems for kinetic equations including the Boltzmann BGK model \cite{lou2020physics}, the phonon Boltzmann equation \cite{li2021physics} and also the RTE \cite{mishra2020physics}. In \cite{xiao2020using}, the full Boltzmann collision operator was approximated by a neural network with the aim of reducing the computational cost.

In this work, we focus on using ML as a tool for model reduction to address the moment closure problem of the RTE.  To close the moment model deduced from kinetic equations, the conventional approach is to provide an approximation to the unclosed high order moment. In optically thick regimes or intermediate regimes, it is easy to find an accurate closure relation. However, in optically thin regimes (or even the free streaming limit), the kinetic model does not possess intrinsic low dimensional structure, which makes any attempt at model reduction difficult \cite{lee2019deep,lee2020model,kim2020fast,rim2020depth}. To address this problem, we start from investigating the RTE in the free streaming limit and derive the exact closure relations with isotropic initial conditions. Motivated by this closure relation, we propose to directly learn the gradient of the unclosed moment using neural networks for the RTE in slab geometry.  
The advantages of our approach are twofold. First, the functional form of the model is consistent with the exact closure for the free streaming limit. Thus, it is expected to gain better accuracy using this ansatz, especially in the optically thin regime. Second, the unclosed high order moments usually have a wide range of magnitudes and become very small in the optically thick regime. Such a target function makes the neural network difficult to learn, unless an appropriate output normalization is applied \cite{bois2020neural}. Our approach in learning gradient provides a natural output normalization since the magnitude of $\partial_x m_{N+1}$ is close to that of $\partial_x m_N$, see equation \eqref{eq:learn-gradient-antasz} in Section \ref{sec:ml-closure}.
In addition, we incorporate the scale invariance of the closure model into the neural networks by learning the normalized gradient. 
We demonstrate numerically that enforcing scale invariance in the closure model makes 
the model more generalizable, especially when applied to initial data whose dynamic range is outside the training set used to create our ML closure model.

In our numerical tests, the training data is generated by initial conditions consisting of a truncated Fourier series with random coefficients \cite{ma2020machine,bois2020neural} and constant scattering and absorption coefficients. The well-trained model is uniformly accurate in the optically thick regime, intermediate regime and the optically thin regime. Moreover, the accuracy of our model is much better than the approach based on creating a ML closure directly trained to match the  moments, as well as the conventional $P_N$ closure and the filtered $P_N$ closure \cite{radice2013new}.  This is demonstrated on a wide range of 1D test problems,  including the variable scattering problem, the Gaussian source problem, the two-material problem, and the reflective boundary conditions.
An important observation is that our ML closure is able to nearly exactly reproduce the moments for the kinetic equation, even for the two-material problem, with a small number ($N=5$) of moments. The motivation for this choice is that, as shown below, at minimum four degrees of freedom are required to exactly close the moment equations in the free streaming limit with isotropic initial conditions. 
Therefore, it is natural to expect that more degrees of freedom are required in the closure for the variable scattering and absorption setting.  In our numerical tests, we find that giving our ML closure  the freedom to relate  the gradient of the sixth-order moment to the gradient of the first six moments is enough to produce accurate results for a variety of different regimes.

Hyperbolicity is another important property in moment closure models, which is difficult to enforce for traditional closure models \cite{cai2014globally,li2021direct} as well as ML models \cite{huang2020learning}. Our ML closure model is not able to preserve  hyperbolicity. We numerically stabilize the model by adding more numerical diffusion with larger penalty constants in the Lax-Friedrichs numerical flux. How to incorporate hyperbolicity in the ML closure model is certainly an interesting topic.   We also remark that we use ML to learn the gradient to close our system, while the contribution in \cite{li2021direct} enforces the hyperbolicity with knowledge of the gradients, but does not involve  ML. Incorporating the hyperbolicity in the ML closure model is an interesting topic, which is discussed in our subsequent works \cite{huang2021ml2,huang2021ml3}.

The remainder of this paper is organized as follows. In Section \ref{sec:moment-method}, we introduce the moment closure problem for the RTE in slab geometry. We derive the exact closure for the free streaming limit with isotropic initial conditions and propose the approach to directly learn the gradient of the unclosed high order moment using ML. In Section \ref{sec:training}, we present the details in data generation and the training of the neural networks. The effectiveness of our ML closure model is demonstrated through extensive numerical results in Section \ref{sec:numerical-test}. Some concluding remarks are given in Section \ref{sec:conclusion}.

\section{Moment closure for radiative transfer equation}\label{sec:moment-method}

In this section, we motivate a range of possible ML models. We start by  introducing the moment method for the RTE in slab geometry. Next, we derive the exact closure for the free streaming limit with isotropic initial conditions.  The ML closure formulations we produce are inspired by the exact free streaming closure. This includes the approach we propose based on directly learning the gradient of the unclosed high order moments using ML. At the end of this section, we go over the proposed functional forms for the various ML models.

\subsection{Moment method}

The time-dependent RTE for a gray medium in slab geometry has the form:
\begin{equation}\label{eq:rte}
	 \partial_t f + v \partial_x f = \sigma_s \brac{\frac{1}{2} \int_{-1}^1 f dv - f} -  \sigma_a f,
\end{equation}
where $f=f(x,v,t)$ is the specific intensity of radiation. The variable $v\in[-1, 1]$ is the cosine of the angle between the photon velocity and the $x$-axis. $\sigma_s = \sigma_s(x)\ge 0$ and $\sigma_a = \sigma_a(x)\ge 0$ are the scattering and absorption coefficients.

It is common to take moments of the RTE against Legendre polynomials. We denote the $k$-th order Legendre polynomial by $P_k = P_k(x)$ for $k\ge 0$. 
Next, we define the $k$-th order moment of the gray model as
\begin{equation}\label{eq:moment-definition-legendre}
	m_k(x,t) = \frac{1}{2} \int_{-1}^1 f(x,v,t) P_k(v) dv, \quad k\ge0.
\end{equation}
Multiplying \eqref{eq:rte} by $P_k(v)$, then integrating over $v\in[-1, 1]$ and using Bonnet’s recursion formula, we derive the moment equations up to moment $m_N$ as
\begin{equation}\label{eq:moment-eqn}
\begin{aligned}
	 \partial_t m_0 + \partial_x m_1 &= -  \sigma_a m_0, \\
	 \partial_t m_1 + \frac{1}{3} \partial_x m_0 + \frac{2}{3} \partial_x m_2  &= - ( \sigma_s + \sigma_a) m_1, \\
	& \cdots \\
	 \partial_t m_N + \frac{N}{2N+1} \partial_x m_{N-1} + \frac{N+1}{2N+1} \partial_x m_{N+1}  &= - ( \sigma_s +  \sigma_a) m_N. \\
\end{aligned}
\end{equation}
This truncated system is clearly not closed, since in the last equation the evolution of $m_N$ depends on $m_{N+1}$.
There are various ways to close the system, including the classical $P_N$ model \cite{chandrasekhar1944radiative}; the entropy-based $M_N$ models \cite{hauck2011high,alldredge2012high,alldredge2014adaptive};  the variable Eddington factor models \cite{levermore1984relating,murchikova2017analytic}; the positive $P_N$ model \cite{hauck2010positive}; the filtered $P_N$ ($FP_N$) models \cite{mcclarren2010robust,radice2013new,laboure2016implicit}; the $B_2$ models \cite{alldredge2016approximating}; and the $MP_N$ model \cite{fan2020nonlinear,fan2020nonlinear2,li2021direct}.    

{
In the numerical tests in Section \ref{sec:numerical-test}, we will compare our ML closure model with the $P_N$ model \cite{chandrasekhar1944radiative} and the $FP_N$ model \cite{radice2013new}. For the sake of completeness, we present the $P_N$ model and the $FP_N$ model here. The $P_N$ model assumes an ansatz of orthogonal polynomials in velocity space and the closure relation is $m_{N+1} = 0$.
Therefore, the resulting $P_N$ model is
\begin{equation}
	\partial_t \bm{m} + A \partial_x \bm{m} = S \bm{m},
\end{equation}
with $\bm{m} = (m_0, m_1, \cdots, m_N)^T$ and the coefficient matrix $A\in\mathbb{R}^{(N+1)\times(N+1)}$:
\begin{equation}\label{eq:coefficient-matrix-PN}
	A = 
	\begin{pmatrix}
    0 				& 	1 				&	0  			& 0  	&	\dots 	& 	0 	\\
    \frac{1}{3} 	& 	0 				& \frac{2}{3} 	& 0  	&	\dots 	& 	0	\\
    0 & \frac{2}{5} & 	0 				& \frac{3}{5} 	& \dots & 0 \\
    \vdots & \vdots &   \vdots 			& \ddots & \vdots & \vdots \\
    0 & 0 & 	\dots 				& \frac{N-1}{2N-1} 	& 0 & \frac{N}{2N-1} \\
    0 & 0 & \dots & 0 & \frac{N+1}{2N+1} & 0
	\end{pmatrix}
\end{equation}
and
\begin{equation}
	S = \textrm{diag}(-\sigma_a, -(\sigma_s + \sigma_a), \cdots, -(\sigma_s + \sigma_a)).
\end{equation}

The $FP_N$ model proposed in \cite{radice2013new} reads as
\begin{equation}\label{eq:FPN-moment-system}
	\partial_t \bm{m} + A \partial_x \bm{m} = S \bm{m} - \nu L \bm{m},
\end{equation}
with $\bm{m} = (m_0, m_1, \cdots, m_N)^T$. Here, the matrice $A$ and $S$ are the same with the $P_N$ closure. In the additional source term, $\nu > 0$ is a tunable parameter estimating the effective opacity of the filter and
\begin{equation}
	L = \textrm{diag}\brac{ l_0, l_1, l_2, \cdots, l_N }
\end{equation}
with
\begin{equation}
	l_k = \frac{\log\rho(\frac{k}{N+1})}{\log\rho(\frac{N}{N+1})}, \quad k=0,1,\cdots,N
\end{equation}
and $\rho$ is the filter function. In our numerical test, we follow \cite{radice2013new} and take $\rho(\eta) = \frac{1}{1+\eta^4}$.
}

All of these methods relate the $(p+1)^{th}$ moment to the $p^{th}$ moment as a way of closing the system.  Each of them have a set of pros and cons, but none of them do very well in the optically thin limit.  Our proposed ML closures have a functional form that is motivated by a limiting case, which we introduce in the next section.

\subsection{Exact closure for the free streaming limit}\label{sec:free-streaming}

For all moment closure models, it is very challenging to accurately compute the free streaming limit, i.e. when $\sigma_s$ is close to zero in \eqref{eq:rte}.
In such a transport dominated regime, the model does not possess an intrinsic low dimensional structure, which makes any attempt at model reduction difficult. In this part, we aim to find an exact closure in such a challenging case under some suitable assumptions. This will motivate the functional form of our ML model.

We focus on the free-streaming limit in the simplified case of 1D:
\begin{equation}\label{eq:free-streaming}
	 \partial_t f + v \partial_x f = 0,
\end{equation}
with an isotropic initial condition
\begin{equation}\label{eq:free-streaming-init}
	f(x,v,0) = f_0(x),
\end{equation}
which serves as a baseline for our ML closure models. 
Here, we do not pay attention to boundary conditions, and the solution is considered to be either periodic or compactly supported.

The exact solution to \eqref{eq:free-streaming}-\eqref{eq:free-streaming-init} is
\begin{equation}\label{eq:free-streaming-exact}
	f(x,v,t) = f_0(x-vt).
\end{equation}
Here, instead of using $P_k$, we define the $k$-th order moment by the projection with respect to a monomial basis:
\begin{equation}\label{eq:moment-definition-vk}
	n_k(x,t) = \int_{-1}^1 f(x,v,t)v^kdv, \quad k\ge0.
\end{equation}
We note that the monomial basis is equivalent to those defined by Legendre polynomials in \eqref{eq:moment-definition-legendre}. Here, we use the monomial basis moments  since it is easier to derive the exact closure relations.
In the free-streaming limit, these moments satisfy the equations:
\begin{equation}\label{eq:free-streaming-moment-equation}
	\partial_t n_k + \partial_x n_{k+1} = 0, \quad k\ge0.
\end{equation}
Plugging \eqref{eq:free-streaming-exact} into \eqref{eq:moment-definition-vk}, we have
\begin{equation}
	\begin{aligned}
		n_k(x,t) ={}& \int_{-1}^1 f_0(x-vt) v^k dv \\
				={}& \int_{x+t}^{x-t} f_0(w) \brac{\frac{x-w}{t}}^k \brac{-\frac{1}{t}} dw \\
				={}& t^{-(k+1)} \int_{x-t}^{x+t} f_0(w) (x-w)^k  dw,
	\end{aligned}	
\end{equation}
which implies
\begin{equation}
	t^{k+1} n_k(x,t) = \int_{x-t}^{x+t} f_0(w) (x-w)^k  dw.
\end{equation}
Taking a time derivative to the above equation yields
\begin{equation}
	t^{k+1} \partial_t n_k(x,t) + (k+1) t^{k} n_k(x,t) = f_0(x+t)(-t)^k + f_0(x-t) t^k.
\end{equation}
We immediately obtain
\begin{equation}
	t \partial_t n_k(x,t) + (k+1) n_k(x,t) = f_0(x+t)(-1)^k + f_0(x-t).
\end{equation}
We notice that the right-hand-side of the above equation only depends on $f_0(x\pm t)$ and the fact that $k$ is even or odd. Thus, we have that: for even $k$,
\begin{equation}
	t \partial_t n_k + (k+1) n_k = t \partial_t n_0 + n_0,
\end{equation}
and for odd $k$
\begin{equation}
	t \partial_t n_k + (k+1) n_k = t \partial_t n_1 + 2n_1,
\end{equation}
i.e. for any $k\ge0$,
\begin{equation}
	t \partial_t n_k + (k+1) n_k = \frac{1+(-1)^k}{2}(t \partial_t n_0 + n_0) + \frac{1-(-1)^k}{2}(t \partial_t n_1 + 2n_1).
\end{equation}

Next, in the above equation, we replace the time derivatives by the spatial derivatives using \eqref{eq:free-streaming-moment-equation}:
\begin{equation}
	- t \partial_x n_{k+1} + (k+1) n_k = \frac{1+(-1)^k}{2}(-t \partial_x n_1 + n_0) + \frac{1-(-1)^k}{2}(-t \partial_x n_2 + 2n_1),
\end{equation}
and then derive
\begin{equation}\label{eq:free-streaming-exact-t}
\begin{aligned}
	\partial_x n_{k+1} ={}& \frac{1+(-1)^k}{2} \partial_x n_1 + \frac{1-(-1)^k}{2} \partial_x n_2 \\
	& + \frac{1}{t}\brac{(k+1) n_k - \frac{1+(-1)^k}{2} n_0 - {(1-(-1)^k)}n_1}.
\end{aligned}
\end{equation}
Notice that the above relation provides an exact closure for $\partial_x n_{k+1}$. However, the closure has dependence on $t$. Next, we remove the dependence on $t$. Taking $k=2$ in \eqref{eq:free-streaming-exact-t} yields
\begin{equation}
	\partial_x n_3 = \partial_x n_1 + \frac{1}{t} (3n_2 - n_0),
\end{equation}
then eliminating $t$ in \eqref{eq:free-streaming-exact-t}
\begin{equation}\label{eq:free-streaming-exact-closure}
\begin{aligned}
	\partial_x n_{k+1} ={}& \frac{1+(-1)^k}{2} \partial_x n_1 + \frac{1-(-1)^k}{2} \partial_x n_2 \\
	& + \frac{(k+1) n_k - \frac{1+(-1)^k}{2} n_0 - {(1-(-1)^k)}n_1}{3n_2 - n_0} \brac{\partial_x n_3 - \partial_x n_1}
\end{aligned}	
\end{equation}
This is an exact closure for $\partial_x n_{k+1}$. We remark that this closure holds for any $k\ge 0$ but it reduces to the trivial case for $k=0,1,2$ and only make senses for $k\ge3$.
Since the two sets of moments defined in \eqref{eq:moment-definition-legendre} and \eqref{eq:moment-definition-vk} are equivalent, it is easy to   derive similar closure relations for $\partial_x m_{k+1}$.

{
We also point out that the exact closure \eqref{eq:free-streaming-exact-closure} cannot be written into a conservative form:
\begin{thm}
The exact closure \eqref{eq:free-streaming-exact-closure} for $k\ge3$ cannot be written into a conservative form. To be more precise, there exists no smooth function $F$ such that \eqref{eq:free-streaming-exact-closure} can be written as
\begin{equation}\label{eq:conservation-exact-closure}
    n_{k+1} = F(n_0, n_1, \cdots, n_k).
\end{equation}
\end{thm}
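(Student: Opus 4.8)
The plan is to argue by contradiction. Suppose a smooth $F$ with $n_{k+1} = F(n_0,\dots,n_k)$ holds along every free-streaming solution with isotropic initial data. Differentiating this relation in $x$ and using the chain rule gives $\partial_x n_{k+1} = \sum_{j=0}^{k} \partial_{n_j}F\,\partial_x n_j$, which is a linear form in the gradients $\partial_x n_0,\dots,\partial_x n_k$ whose coefficients $\partial_{n_j}F$ depend only on the moment values. The exact closure \eqref{eq:free-streaming-exact-closure} expresses $\partial_x n_{k+1}$ as another such linear form, but one that involves only $\partial_x n_1,\partial_x n_2,\partial_x n_3$. Equating the two and comparing coefficients of the gradients will produce over-determined constraints on the first partials of $F$, from which I expect a contradiction.

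The step that makes coefficient comparison legitimate is the observation that, at a fixed $(x_0,t_0)$ with $t_0>0$, the moment values $(n_0,\dots,n_k)$ and the gradients $(\partial_x n_0,\dots,\partial_x n_k)$ can be prescribed independently by choosing the initial datum $f_0$ appropriately; equivalently, the $2(k+1)$ linear functionals $f_0\mapsto n_j$ and $f_0\mapsto\partial_x n_j$ for $0\le j\le k$ are linearly independent, so their joint evaluation map is onto. Granting this, with the moment values held fixed the gradient vector ranges over all of $\mathbb{R}^{k+1}$, so the identity forces the coefficients to match term by term. Since $\partial_x n_0$ does not appear in \eqref{eq:free-streaming-exact-closure}, this yields $\partial_{n_0}F\equiv 0$. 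Reading off the coefficient of $\partial_x n_1$ gives $\partial_{n_1}F = 1-c$ when $k$ is even and $\partial_{n_1}F = -c'$ when $k$ is odd, where $c=\frac{(k+1)n_k-n_0}{3n_2-n_0}$ and $c'=\frac{(k+1)n_k-2n_1}{3n_2-n_0}$.

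The contradiction then follows from equality of the mixed second partials of the smooth function $F$ (Clairaut's theorem). On the one hand $\partial_{n_0}F\equiv 0$ gives $\partial_{n_1}\partial_{n_0}F\equiv 0$. On the other hand $\partial_{n_0}\partial_{n_1}F$ is the $n_0$-derivative of the coefficient of $\partial_x n_1$, which is $-\frac{(k+1)n_k-3n_2}{(3n_2-n_0)^2}$ for even $k$ and $-\frac{(k+1)n_k-2n_1}{(3n_2-n_0)^2}$ for odd $k$. Neither of these vanishes identically on the region $3n_2-n_0\neq 0$ where the closure is defined, so $\partial_{n_0}\partial_{n_1}F\neq\partial_{n_1}\partial_{n_0}F$, which is impossible. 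This rules out the conservative form \eqref{eq:conservation-exact-closure} for every $k\ge 3$ (the computation is uniform and in particular covers the boundary case $k=3$, where the coefficient of $\partial_x n_1$ still depends on $n_0$ through the denominator).

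The main obstacle I anticipate is the independence claim used for coefficient matching, namely that the moments and their spatial gradients are not functionally constrained at a point. I would establish it from the representation $n_j(x_0,t_0) = t_0^{-(j+1)}\int_{x_0-t_0}^{x_0+t_0} f_0(w)(x_0-w)^j\,dw$ together with the analogous formula for $\partial_x n_j$ obtained by differentiating in $x_0$; an integration by parts shows that the gradient functionals carry genuinely new information through the boundary contributions $f_0(x_0\pm t_0)$, which gives the required linear independence. Once this is in place, the remainder is the short Clairaut computation above.
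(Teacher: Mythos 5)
Your argument is, at its core, the same as the paper's: assume a smooth $F$ exists, identify its first partials with the coefficients of the gradients appearing in \eqref{eq:free-streaming-exact-closure}, apply equality of mixed partial derivatives, and exhibit an $f_0$ for which the resulting pointwise constraint on the moments fails. The only real difference is the index pair: the paper compares $\partial_{n_3}\partial_{n_2}F$ with $\partial_{n_2}\partial_{n_3}F$, while you compare $\partial_{n_0}\partial_{n_1}F$ with $\partial_{n_1}\partial_{n_0}F$. Your reading of the coefficient of $\partial_x n_1$ in both parities of $k$, and of its $n_0$-derivative, is correct, so the Clairaut computation itself goes through.

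The genuine problem is the lemma you invoke to legitimize the coefficient matching. It is not true that, at a fixed $(x_0,t_0)$ with the moment values $(n_0,\dots,n_k)$ held fixed, the gradient vector $(\partial_x n_0,\dots,\partial_x n_k)$ ranges over all of $\mathbb{R}^{k+1}$: equation \eqref{eq:free-streaming-exact-t} of the paper shows that for every $j\ge 2$ the gradient $\partial_x n_{j}$ is determined by $\partial_x n_0$, $\partial_x n_1$, the lower moments and $t$. The boundary contributions $f_0(x_0\pm t_0)$ that your integration by parts produces supply only two extra degrees of freedom, so for fixed $t_0$ the achievable gradients form a two-parameter affine family, and even letting $t_0$ vary gives at most three parameters --- which for $k\ge 3$ cannot span $\mathbb{R}^{k+1}$. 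Hence the claimed surjectivity of the joint evaluation map is false and term-by-term matching is not forced by the identity holding along solutions. The paper sidesteps this by (implicitly) reading conservativity as a statement about the coefficient functions of \eqref{eq:free-streaming-exact-closure} themselves: the $1$-form $\sum_j c_j(n)\,dn_j$ on moment space would have to be exact, hence closed, and the final step with $f_0$ only checks that the failure of closedness occurs at realizable moment values. If you recast your coefficient identification in that form, the remainder of your argument is sound; as written, the independence claim is a gap.
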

\begin{proof}
We prove by contradiction. Assume that there exist a smooth function $F$ such that \eqref{eq:conservation-exact-closure} holds true. Using the equality of mixed partial derivatives, we have
\begin{equation}
    \frac{\partial }{\partial n_3}\brac{ \frac{1-(-1)^k}{2}} = \frac{\partial }{\partial n_2}\brac{\frac{(k+1) n_k - \frac{1+(-1)^k}{2} n_0 - {(1-(-1)^k)}n_1}{3n_2 - n_0}}
\end{equation}
This is equivalent to
\begin{equation}
    (k+1) n_k - \frac{1+(-1)^k}{2} n_0 - {(1-(-1)^k)}n_1 = 0
\end{equation}
Plugging \eqref{eq:free-streaming-exact}-\eqref{eq:moment-definition-vk} into the above equality, we have
\begin{equation*}
\begin{aligned}
0 ={}& (k+1) \int_{-1}^1 f_0(x-vt) v^k dv - \frac{1+(-1)^k}{2} \int_{-1}^1 f_0(x-vt)  dv - {(1-(-1)^k)} \int_{-1}^1 f_0(x-vt) v dv \\
={}& \int_{-1}^1 f_0(x-vt) \brac{(k+1)v^k - \frac{1+(-1)^k}{2} - {(1-(-1)^k)} v }dv.
\end{aligned}
\end{equation*}
For any fixed $k\ge3$, we can always find $f_0$ such that the above relation does not hold true.
\end{proof}
}

\subsection{Machine learning closure model}\label{sec:ml-closure}

In this section, armed with what we just learned about the functional form of moment closures in the free-streaming limit in 1D, we introduce several  models we will explore as possible closure models. As part of this work on closing moment based models of the RTE, we explore   the effectiveness of each of these models. 

A standard approach for moment closures is to find a relation between $m_{N+1}$ and the lower order moments:
\begin{equation}\label{eq:learn-moment-antasz}
	m_{N+1} = \mathcal{N}(m_0,m_1,\cdots,m_N).
\end{equation}
The ML based approach will find $\mathcal{N}:\mathbb{R}^{N+1}\rightarrow\mathbb{R}$ that is represented by a neural network and trained from data. This is the regression in supervised learning and also a part of the end-to-end learning procedure in \cite{han2019uniformly}. We call this approach Learning the Moment (LM).
However, in our numerical implementation, we find that the training process usually gets stuck in local minimum when using this approach.  Hence, the model has difficulty fitting the data well, see the detailed discussions of Figure \ref{fig:train-loss} in Section \ref{sec:training}. 
\begin{figure}
    \centering
    \includegraphics[width=0.5\textwidth]{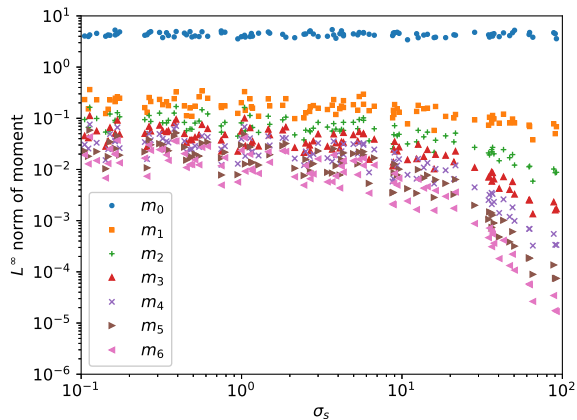}
    \caption{Magnitudes of moments $m_k$ for $k=0,\cdots,6$ with different scattering coefficients in the training dataset.}
    \label{fig:moment-magnitude}
\end{figure}

We also experiment with another approach, which we refer to as the weighted moment model.  The weighted moment model recognizes the fact that equation \eqref{eq:free-streaming-exact-closure} could loosely be viewed as relating many weighted moments as a closure.  The Learning Weighted Moment (LWM) model takes the following form:
\begin{equation}\label{eq:learn-moment-weighted-antasz}
	m_{N+1} = \sum_{k=0}^N \mathcal{N}_k(m_0,m_1,\ldots,m_N) m_k.
\end{equation}
Here $\mathcal{N}=(\mathcal{N}_0, \ldots, \mathcal{N}_N): \mathbb{R}^{N+1}\rightarrow\mathbb{R}^{N+1}$ will be represented by a neural network.
This formulation can deal with normalization issues. However, as shown in Figure \ref{fig:train-loss} in Section \ref{sec:training}, its behaviour is similar to that of the simpler Learn Moment model,  in \eqref{eq:learn-moment-antasz}.

Motivated by the exact closure \eqref{eq:free-streaming-exact-closure} for the free streaming limit, we propose to directly learn the gradient of the unclosed moment. Specifically, we assume that $\partial_x m_{N+1}$ depends linearly on the gradients of the lower order moments with the coefficients being functions of the lower order moments:
\begin{equation}\label{eq:learn-gradient-antasz}
	\partial_x m_{N+1} = \sum_{k=0}^N \mathcal{N}_k(m_0,m_1,\ldots,m_N)\partial_x m_k.
\end{equation}
Here $\mathcal{N}=(\mathcal{N}_0, \ldots, \mathcal{N}_N): \mathbb{R}^{N+1}\rightarrow\mathbb{R}^{N+1}$ will be represented by a neural network and trained from data. We call this approach learning the gradient (LG).

The advantages of directly learning the gradient in \eqref{eq:learn-gradient-antasz} are twofold. First, this ansatz \eqref{eq:learn-gradient-antasz} is consistent with the exact closure for the free streaming limit \eqref{eq:free-streaming-exact-closure}. Therefore, it is expected to have better accuracy, especially in the optically thin regime. Second, the unclosed high order moments usually have a wide range of magnitudes and become very small in the optically thick regime. In Figure \ref{fig:moment-magnitude}, we show the $L^{\infty}$-norm (in space and time) of moments $m_k$ for $k=0,\cdots,6$ with different scattering coefficients. It is observed that the magnitude of $m_6$ ranges from $10^{-5}$ to $10^{-2}$. Such a target function makes the neural network difficult to learn, unless appropriate output normalization is applied. Similar problems were also noticed in closing the Vlasov-Poisson equation in \cite{bois2020neural}, where the output normalization technique was applied to the heat flux with its estimation given by the Navier-Stokes approximation. Our antasz \eqref{eq:learn-gradient-antasz} provides a natural output normalization since the magnitude of $\partial_x m_{N+1}$ is similar to that of $\partial_x m_N$.

We further incorporate the scale invariance of the closure model into the neural networks and {learn the gradient with normalized moments (LGNM)}:
\begin{equation}\label{eq:learn-gradient-linear-antasz}
	\partial_x m_{N+1} = \sum_{k=0}^N \mathcal{N}_k\brac{\frac{m_1}{m_0},\frac{m_2}{m_0},\ldots,\frac{m_N}{m_0}}\partial_x m_k.
\end{equation}
Here $\mathcal{N}=(\mathcal{N}_0, \ldots, \mathcal{N}_N):\mathbb{R}^{N}\rightarrow\mathbb{R}^{N+1}$ will be replaced by a neural network and trained from data. {Given the linearity of the RTE in \eqref{eq:rte}, LGNM in \eqref{eq:learn-gradient-linear-antasz} is expected to have better performance than LG in \eqref{eq:learn-gradient-antasz}: (1) If $f$ is scaled by a constant in \eqref{eq:rte}, the antasz \eqref{eq:learn-gradient-linear-antasz} can provide exactly the same prediction; (2) LGNM in \eqref{eq:learn-gradient-linear-antasz} removes the linear redundancy of the training data and should be more data efficient; (3) LGNM in \eqref{eq:learn-gradient-linear-antasz} does not sacrifice any expressive ability.} This results in a ML model with better generalization performance, especially for testing data with a totally different magnitude. This will be further investigated numerically, see Figure \ref{fig:gauss-source-scale} in Example \ref{ex:gauss-source} in Section \ref{sec:numerical-test}.

We also point out a disadvantage of the ML closure model with LG in \eqref{eq:learn-gradient-antasz} or LGNM in \eqref{eq:learn-gradient-linear-antasz}. This breaks the conservation property of the last equation in \eqref{eq:moment-eqn} and generates a partially-conservative system. This might lead to problems in numerical implementations, especially when shocks exists \cite{koellermeier2017numerical,koellermeier2021high}.

\section{Training of the neural network}\label{sec:training}

In this section, we present the details of generating the training data and the training of the neural network.  These details are provided for reproduce-ability of the results.

\subsection{Data preparation}
In this section, we outline how we generate the data we use for training our ML model.  The data comes from simulating a simple 1D slab RTE problem over a range of initial conditions and scattering and absorption coefficients.   The data is curated and used to create $M$ time  snapshots of the $N$ moments, $m_{N+1}^{true}$, of the kinetic solution over a fixed time window.  This data is used in training our ML model (see section \ref{sec:subsec-training}).   We now go over the details for the creation of this data.

We consider the unit interval $[0, 1]$ in the physical domain with periodic boundary conditions. Following \cite{ma2020machine,bois2020neural}, we take the initial conditions to be an isotropic distribution in the form of a truncated Fourier series:
\begin{equation}\label{eq:init-fourier-series}
	f_0(x,v) = a_0 + \sum_{k=1}^{k_{\max}} a_k\sin(2k\pi x + \phi_k).
\end{equation}
Here, we take $k_{\max}=10$ in our dataset. For $k\ge1$, $a_k$ and $\phi_k$ are random variables sampled from the uniform distributions on $[-\frac{1}{k}, \frac{1}{k}]$ and $[0, 2\pi]$, respectively. We take $a_0=c+\sum_{k=1}^{k_{\max}}\frac{1}{k}$ with $c$ a random variable sampled from the uniform distributions on $[0,1]$. This guarantees the positivity of the distribution function. Both $\sigma_s$ and $\sigma_a$ are constants over the domain for each run. The scattering coefficient $\sigma_s$ is sampled from a log-uniform distribution on $[0.1, 100]$. The absorption coefficient $\sigma_a$ are randomly sampled from $[0, 10]$. A possible function generated with \eqref{eq:init-fourier-series} are shown in Figure \ref{fig:init-profile}. In the current work, we train with 100 different initial data.
\begin{figure}
    \centering
    \includegraphics[width=0.5\textwidth]{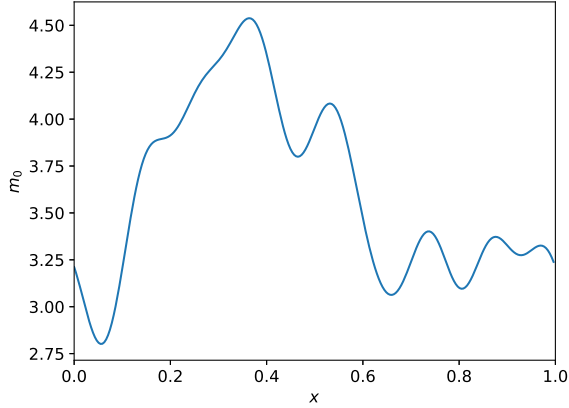}
    \caption{A possible initial condition generated from the truncated Fourier series in the training dataset.}
    \label{fig:init-profile}
\end{figure}

The space time discontinuous Galerkin (DG) method  \cite{crockatt2017arbitrary,crockatt2019hybrid} is applied to solve the RTE in slab geometry \eqref{eq:rte}. Piecewise polynomials of degree 2 in space and degree 1 in time are applied. We take 64 Gauss-Legendre quadrature points to discretize the velocity space. We take the number of grid points in space to be $N_x=512$. The CFL condition is taken to be $\Delta t=8\Delta x$ and the final time is $t=1$.  In this work, we use the $N$ moments of every time step taken for each of the 100 initial conditions to form our ground truth data set.

\subsection{Training}\label{sec:subsec-training}
For our neural network model, we use a fully-connected neural network and choose to use the hyperbolic tangent for our  activation function. The number of layers is taken to be 6 and the number of nodes in each hidden layer is taken to be 256, unless otherwise stated. {The input normalization is applied: each component of the input is linearly scaled to have zero mean value and unit variance.} For the training of the neural network, we take 1000 total epochs (the number of iterations in the optimization process). The learning rate is set to be $10^{-3}$ in the initial epoch and decays by 0.35 every 100 epochs. $L^2$ regularization is applied with weight $10^{-7}$. The batch size is taken to be 1024. The training is implemented within the PyTorch framework \cite{paszke2019pytorch}. We refer readers to \cite{higham2019deep} for the details on the basic concepts of ML.

For the moment model and  weighted moment model, \eqref{eq:learn-moment-antasz} and \eqref{eq:learn-moment-weighted-antasz}, the loss function is taken to be the mean squared error (MSE):
\begin{equation}
    \mathcal{L} = \frac{1}{N_{\textrm{data}}} \sum_{j,n}\abs{{m^{\textrm{true}}_{N+1}(x_j, t_n) - m^{\textrm{appx}}_{N+1}(x_j, t_n)}}^2.
\end{equation}
Here, $m^{\textrm{true}}_{N+1}(x_j, t_n)$ denote the $(N+1)$-th order moment at $x=x_j$ and $t=t_n$ computed from the kinetic solver and $m^{\textrm{appx}}_{N+1}(x_j, t_n)$ is the neural network given in equations \eqref{eq:learn-moment-antasz} or \eqref{eq:learn-moment-weighted-antasz}, the moment model and weighted moment model respectively.  Here, $N_{\textrm{data}}$ denotes the total count of the data used in  training the neural network. For the gradient model and  normalized gradient model, \eqref{eq:learn-gradient-antasz} and \eqref{eq:learn-gradient-linear-antasz}, the loss function is taken to be:
\begin{equation}
    \mathcal{L} = \frac{1}{N_{\textrm{data}}}\sum_{j,n}\abs{{\partial_x m^{\textrm{true}}_{N+1}(x_j, t_n) - \partial_x m^{\textrm{appx}}_{N+1}(x_j, t_n)}}^2.
\end{equation}
Here, $\partial_x m^{\textrm{true}}_{N+1}(x_j, t_n)$ denotes  the spatial derivative of $(N+1)$-th order moment at $x=x_j$ and $t=t_n$ computed from the kinetic solver and $\partial_x m^{\textrm{appx}}_{N+1}(x_j, t_n)$ comes from the evaluation of the neural network using \eqref{eq:learn-gradient-antasz} or \eqref{eq:learn-gradient-linear-antasz}.

In the training process, we find that the approach to learn the moment in \eqref{eq:learn-moment-antasz} and learn the weighted moment in \eqref{eq:learn-moment-weighted-antasz} usually get stuck in a local minimum and does not fit the data well, no matter how we tune the hyperparameters. As a comparison, the approach to learn the gradient in \eqref{eq:learn-gradient-antasz} and \eqref{eq:learn-gradient-linear-antasz} has much smaller relative errors, see Figure \ref{fig:train-loss}. Moreover, we observe that increasing the number of moments will result in the smaller training error in Figure \ref{fig:train-loss}. The relative $L^2$ error in Figure \ref{fig:train-loss} is defined to be the relative error between the target function and the approximated function. Specifically, for  the moment model, \eqref{eq:learn-moment-antasz} and \eqref{eq:learn-moment-weighted-antasz}, the relative $L^2$ error for the moment models is defined to be
\begin{equation}
	E_2 = \sqrt{ \frac{\sum_{j,n}({m^{\textrm{true}}_{N+1}(x_j, t_n) - m^{\textrm{appx}}_{N+1}(x_j, t_n)})^2}{\sum_{j,n}({m^{\textrm{true}}_{N+1}(x_j, t_n)})^2} }.
\end{equation}
Likewise, for the gradient model, \eqref{eq:learn-gradient-antasz} and \eqref{eq:learn-gradient-linear-antasz}, the relative $L^2$ error for the gradient model is defined to be
\begin{equation}
	E_2 = \sqrt{ \frac{\sum_{j,n}({\partial_x m^{\textrm{true}}_{N+1}(x_j, t_n) - \partial_x m^{\textrm{appx}}_{N+1}(x_j, t_n)})^2}{\sum_{j,n}({\partial_x m^{\textrm{true}}_{N+1}(x_j, t_n)})^2} }.
\end{equation}

\begin{figure}
    \centering
    \subfigure[$N=1$]{
    \begin{minipage}[b]{0.46\textwidth}
    \includegraphics[width=1\textwidth]{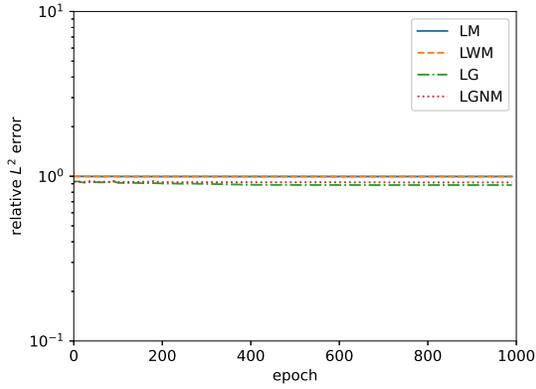}
    \end{minipage}
    }
    \subfigure[$N=3$]{
    \begin{minipage}[b]{0.46\textwidth}    
    \includegraphics[width=1\textwidth]{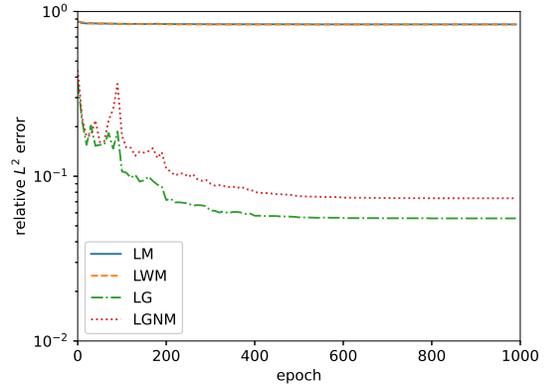}
    \end{minipage}
    }
    \bigskip	    
    \subfigure[$N=5$]{
    \begin{minipage}[b]{0.46\textwidth}
    \includegraphics[width=1\textwidth]{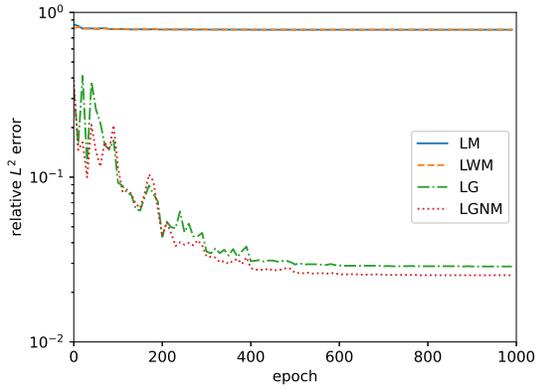}
    \end{minipage}
    }
    \subfigure[$N=7$]{
    \begin{minipage}[b]{0.46\textwidth}    
    \includegraphics[width=1\textwidth]{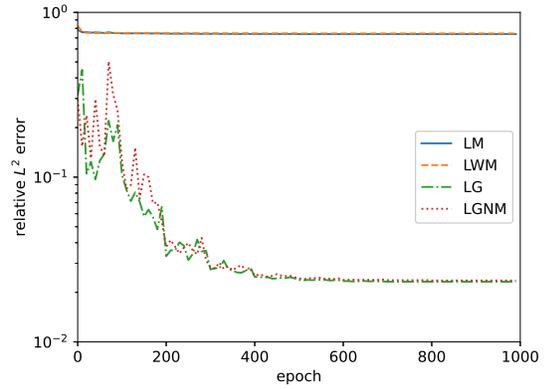}
    \end{minipage}
    }	    
    \caption{Relative $L^2$ error of the target function during the training process. Four approaches are compared here: learning the moment (LM) in \eqref{eq:learn-moment-antasz}; learning the weighted moment (LWM) in \eqref{eq:learn-moment-weighted-antasz}; learning the gradient (LG) in \eqref{eq:learn-gradient-antasz}; learning the gradient with normalized moments (LGNM) in \eqref{eq:learn-gradient-linear-antasz}.}
    \label{fig:train-loss}
\end{figure}

The depth and width of neural networks (i.e., the number of hidden layers and the number of nodes in the hidden layers) are also crucial hyperparameters in a neural network. Here, we test the number of layers to be $2,3,\cdots,7$ and number of nodes in hidden layers to be $8,16,\cdots,1024$. The results are shown in Figure \ref{fig:nlayer-training}. The error decreases when we increase the number of layers and nodes in hidden layers and saturate when they reach a certain level. These tests indicate that taking number of layers to be 6 and number of nodes to be 256 are good hyperparameters for our neural network.  As such these are the values used in this work unless otherwise stated.  
\begin{figure}
    \centering
    \includegraphics[width=0.5\textwidth]{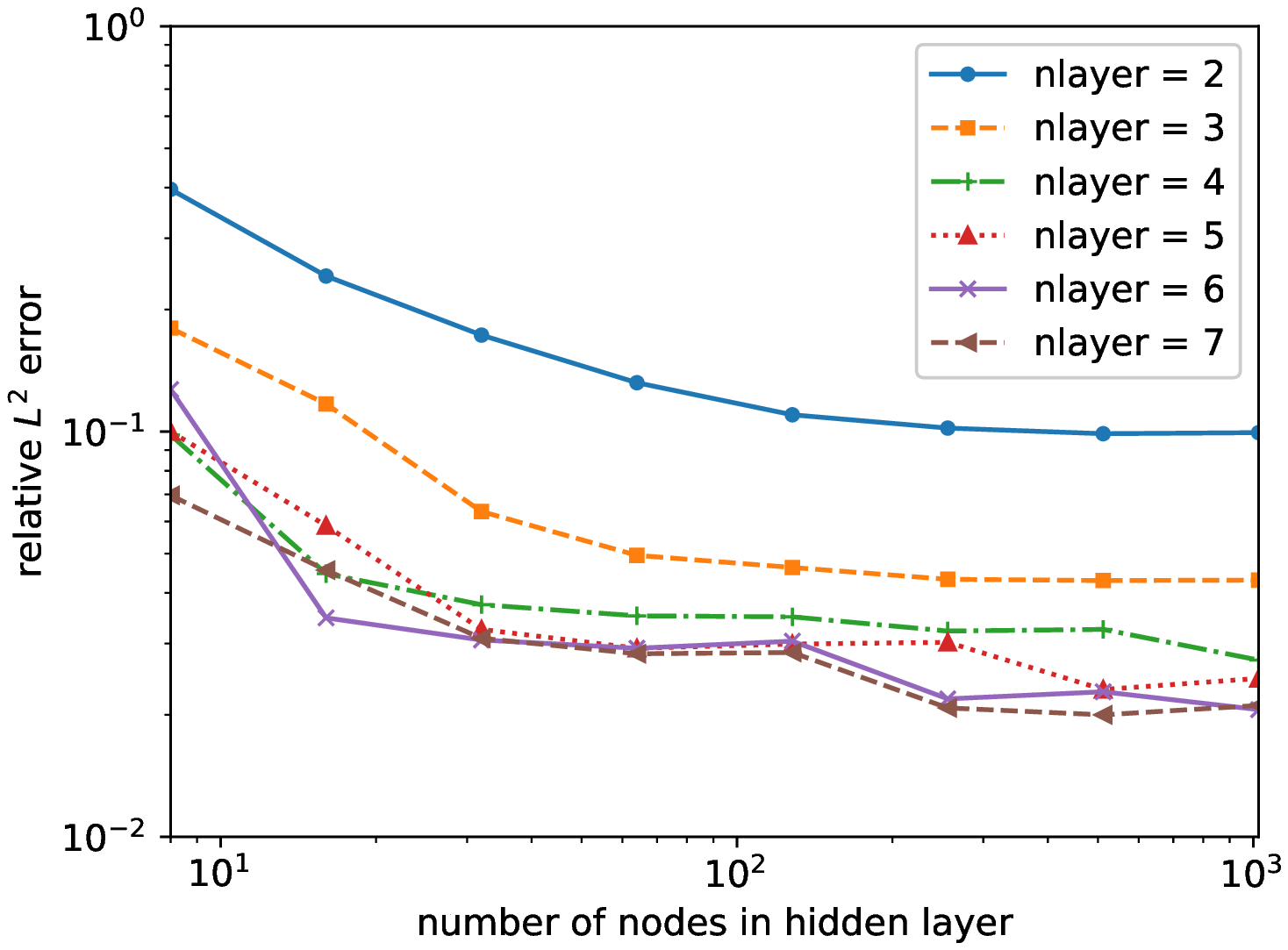}
    \caption{Relative $L^2$ error in the training data with different depths and widths of the neural networks. The number of layers: $2, 3, \dots,7$; the number of nodes in the hidden layers: $8, 16, \dots, 1024$. The number of moments in the system is $N=5$.}
    \label{fig:nlayer-training}
\end{figure}

\section{Numerical tests}\label{sec:numerical-test}

In this section, we show the performance of our ML closure model on a variety of benchmark tests, including problems with constant scattering and absorption coefficients, variable scattering  problems, Gaussian source problems and two-material problems. In all the numerical examples, we take the physical domain to be the unit interval $[0,1]$. We consider periodic boundary conditions and reflective boundary conditions.

To numerically solve the ML moment closure system, we apply the fifth-order finite difference WENO scheme \cite{jiang1996efficient} with a Lax–Friedrichs flux splitting for the spatial discretization. For the time discretization, we employ the third order strong-stability-preserving Runge-Kutta (RK) scheme \cite{shu1988efficient}. We take the grid number in space to be $N_x=256$. The CFL condition is taken to be $\Delta t=0.1\Delta x$. The penalty constant in the Lax-Friedrichs numerical flux is taken to be $\alpha_{\textrm{LF}}=5$, unless otherwise stated.

{We mainly focus on the comparison of the ML closure based on Learning the Gradient with Normalized Moments (LGNM) \eqref{eq:learn-gradient-linear-antasz}, ML closure based on  Learning the Moments (LM) \eqref{eq:learn-moment-antasz}, and  the $P_N$ closure \cite{chandrasekhar1944radiative}. In addition, we consider the $FP_N$ closure \cite{radice2013new} and make comparisons with the other three models in the context of  modeling transport on bounded domains with reflecting walls in  Example \ref{ex:gauss-source} and in the study of the two-material problem in Example \ref{ex:two-material}. We note that, in most cases, the ML model based on LGNM \eqref{eq:learn-gradient-linear-antasz} has better accuracy than the Learn Gradient (LG) model \eqref{eq:learn-gradient-antasz}. In particular, the benefit of using \eqref{eq:learn-gradient-linear-antasz} will be illustrated using the Gaussian source problem in Example \ref{ex:gauss-source}. 
}

An important observation is that our ML closure is able to nearly exactly reproduce the moments for the kinetic equation, even for the two-material problem, with a small number ($N=5$) of moments. 
In our numerical tests, we find that giving our ML closure  the freedom to relate  the gradient of the sixth-order moment to the gradient of the $0^{th}$ through $5^{th}$  moments is enough to produce accurate results for different regimes. Moreover, we numerically observe that if we use fewer than $N=5$ moments in our model, where the ML closure would relate the gradient of the highest moment to the gradient of the lower moments,  the method can still produce accurate results in the intermediate  regime,  but struggles to describe the solution near the optically thin regime.

\begin{exam}[constant scattering and absorption coefficients]\label{ex:const}
The setup of this example is the same as the data preparation. The scattering and absorption coefficients are taken to be constants over the domain. 

We test three different regimes: the optically thick regime ($\sigma_s=\sigma_t=100$); the intermediate regime ($\sigma_s=\sigma_t=10$); and the optically thin regime ($\sigma_s=\sigma_t=1$). All  closures work well in the optically thick regime and thus we omit the results and only focus on the intermediate and optically thin regimes. 

In Figure \ref{fig:profile-compare-N1}, we show the numerical solutions of $m_0$ and $m_1$ at $t=0.5$ with two moments ($N=1$) in the system and in the closure. We observe that, in the intermediate regime ($\sigma_s=\sigma_t=10$), the solution generated by the closure model with LGNM agrees well with the solution to the kinetic model. As a comparison, there exist some deviations for the other two closure models, the LM model and the $P_N$ model. However, in the optically thin regime ($\sigma_s=\sigma_t=1$), all  closures fail to capture the correct physical phenomenon for the kinetic model. This indicates that taking only two moments is not enough to close the kinetic equation in this regime.  This is also consistent with what we discovered in Section \ref{sec:free-streaming}, that is, as we move to the free streaming limit, the closure is related to the gradients of many moments.
\begin{figure}
    \centering
    \subfigure[$m_0$, $\sigma_s = \sigma_t = 10$]{
    \begin{minipage}[b]{0.46\textwidth}
    \includegraphics[width=1\textwidth]{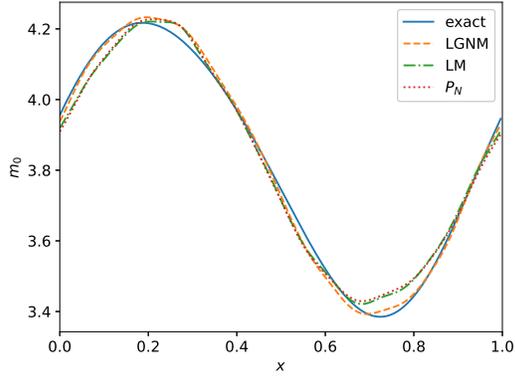}
    \end{minipage}
    }
    \subfigure[$m_1$, $\sigma_s = \sigma_t = 10$]{
    \begin{minipage}[b]{0.46\textwidth}    
    \includegraphics[width=1\textwidth]{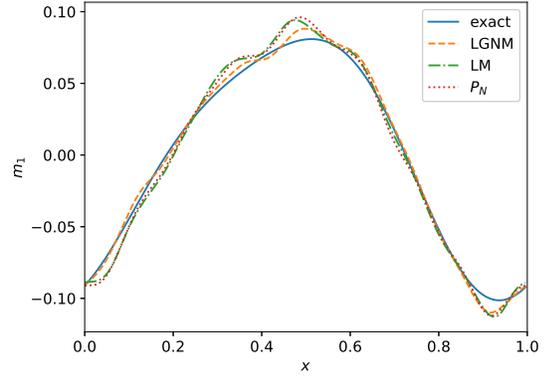}
    \end{minipage}
    }
    \bigskip
    \subfigure[$m_0$, $\sigma_s = \sigma_t = 1$]{
    \begin{minipage}[b]{0.46\textwidth}
    \includegraphics[width=1\textwidth]{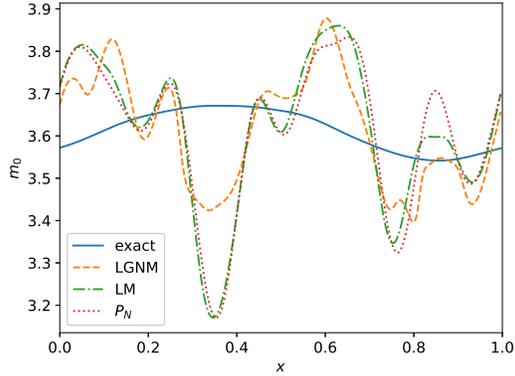}
    \end{minipage}
    }
    \subfigure[$m_1$, $\sigma_s = \sigma_t = 1$]{
    \begin{minipage}[b]{0.46\textwidth}    
    \includegraphics[width=1\textwidth]{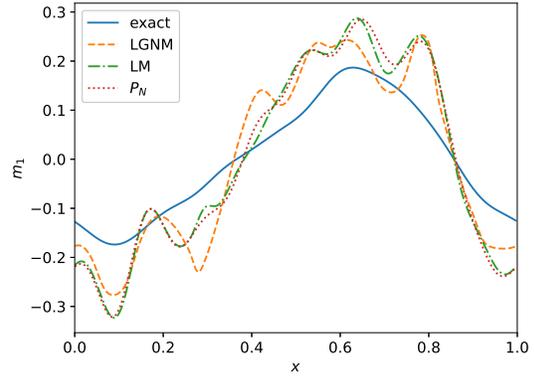}
    \end{minipage}
    }
    \caption{Example \ref{ex:const}: constant scattering and absorption coefficients. Here we are plotting the numerical solutions of $m_0$ and $m_1$ for three moment closures, including $P_N$, Learning Moment (LM) and Learning Gradient with Normalized Moments (LGNM), at $t=0.5$ with $N=1$ in the intermediate regime ($\sigma_s=\sigma_t=10$) and the optically thin regime ($\sigma_s=\sigma_t=1$). We note that the closure based on the LGNM performs slightly better than the other two methods for the $N=1$ case.}
    \label{fig:profile-compare-N1}
\end{figure}

In Figure \ref{fig:profile-compare-N5}, we show the numerical solutions of $m_0$ and $m_1$ at $t=0.5$ with $N=5$. In the intermediate regime, all the closures predict the solution quite well. Moreover, it is observed that the ML closure model based on the LGNM formulation has the smallest error (see the zoomed-in figure). In the optically thin regime, only the closure model based on the LGNM formulation agrees well with the kinetic equation, while the other two closures have large deviations in the moments.
\begin{figure}
    \centering
    \subfigure[$m_0$, $\sigma_s = \sigma_t = 10$]{
    \begin{minipage}[b]{0.46\textwidth}
    \includegraphics[width=1\textwidth]{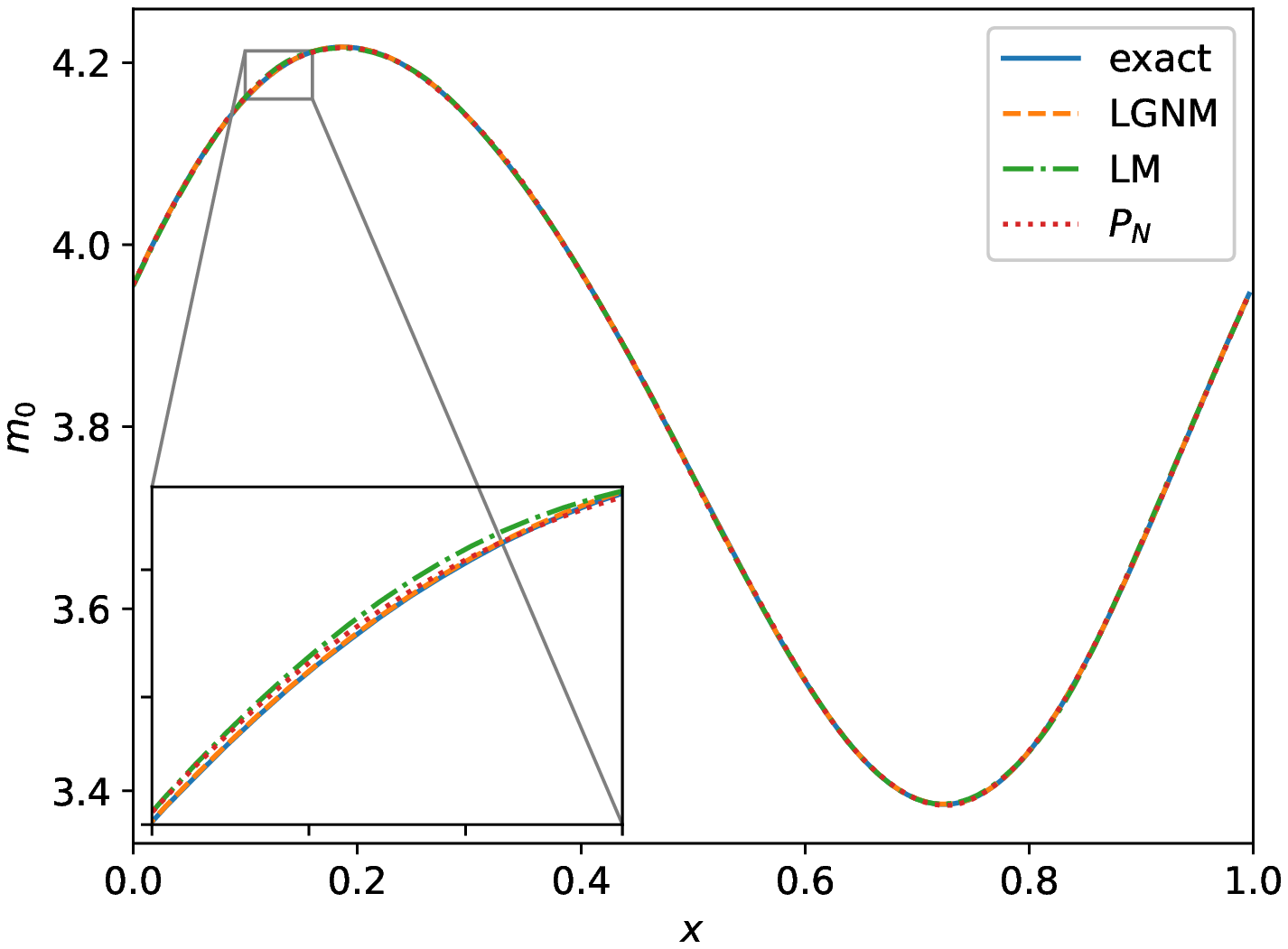}
    \end{minipage}
    }
    \subfigure[$m_1$, $\sigma_s = \sigma_t = 10$]{
    \begin{minipage}[b]{0.46\textwidth}    
    \includegraphics[width=1\textwidth]{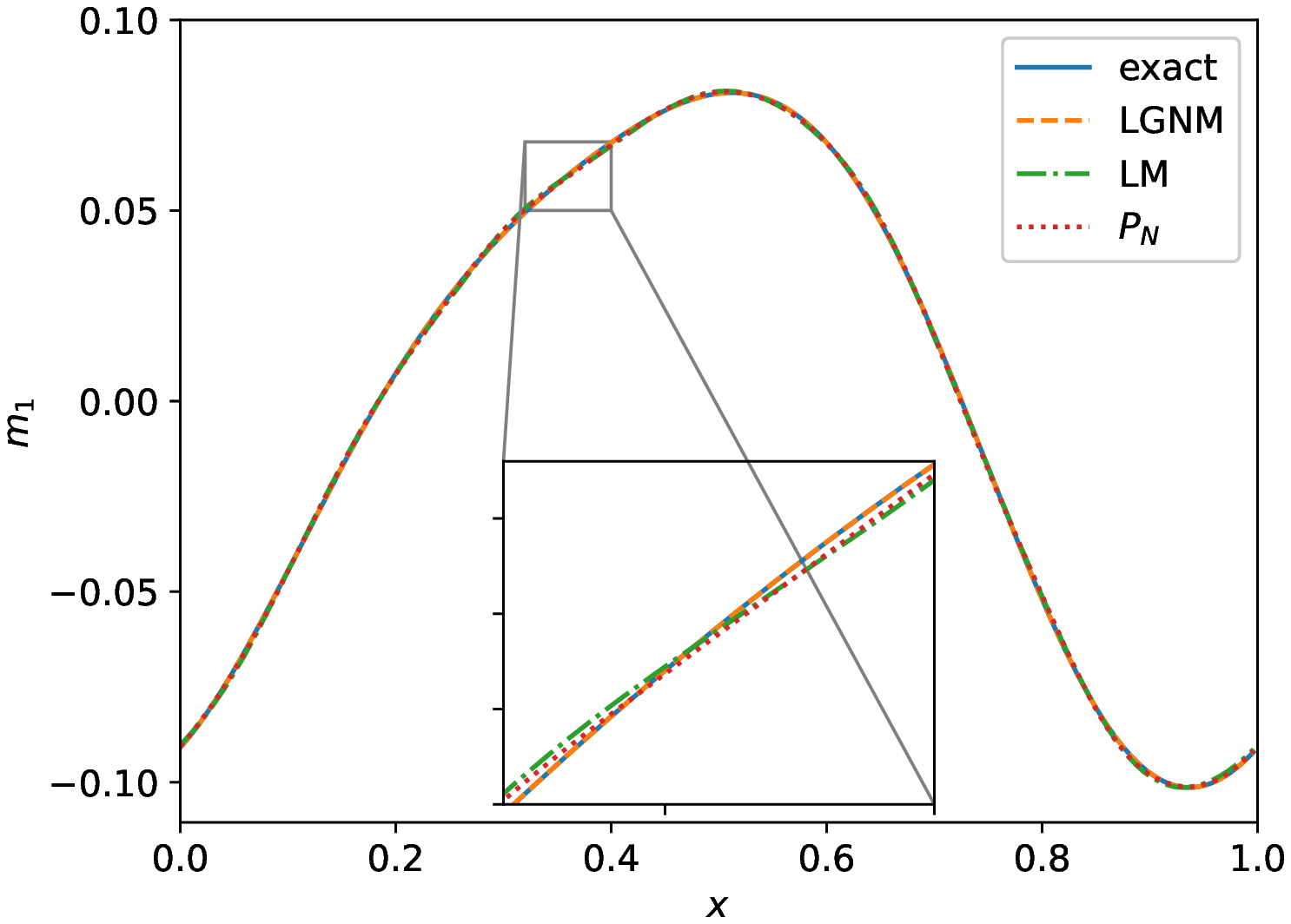}
    \end{minipage}
    }
    \bigskip
    \subfigure[$m_0$, $\sigma_s = \sigma_t = 1$]{
    \begin{minipage}[b]{0.46\textwidth}
    \includegraphics[width=1\textwidth]{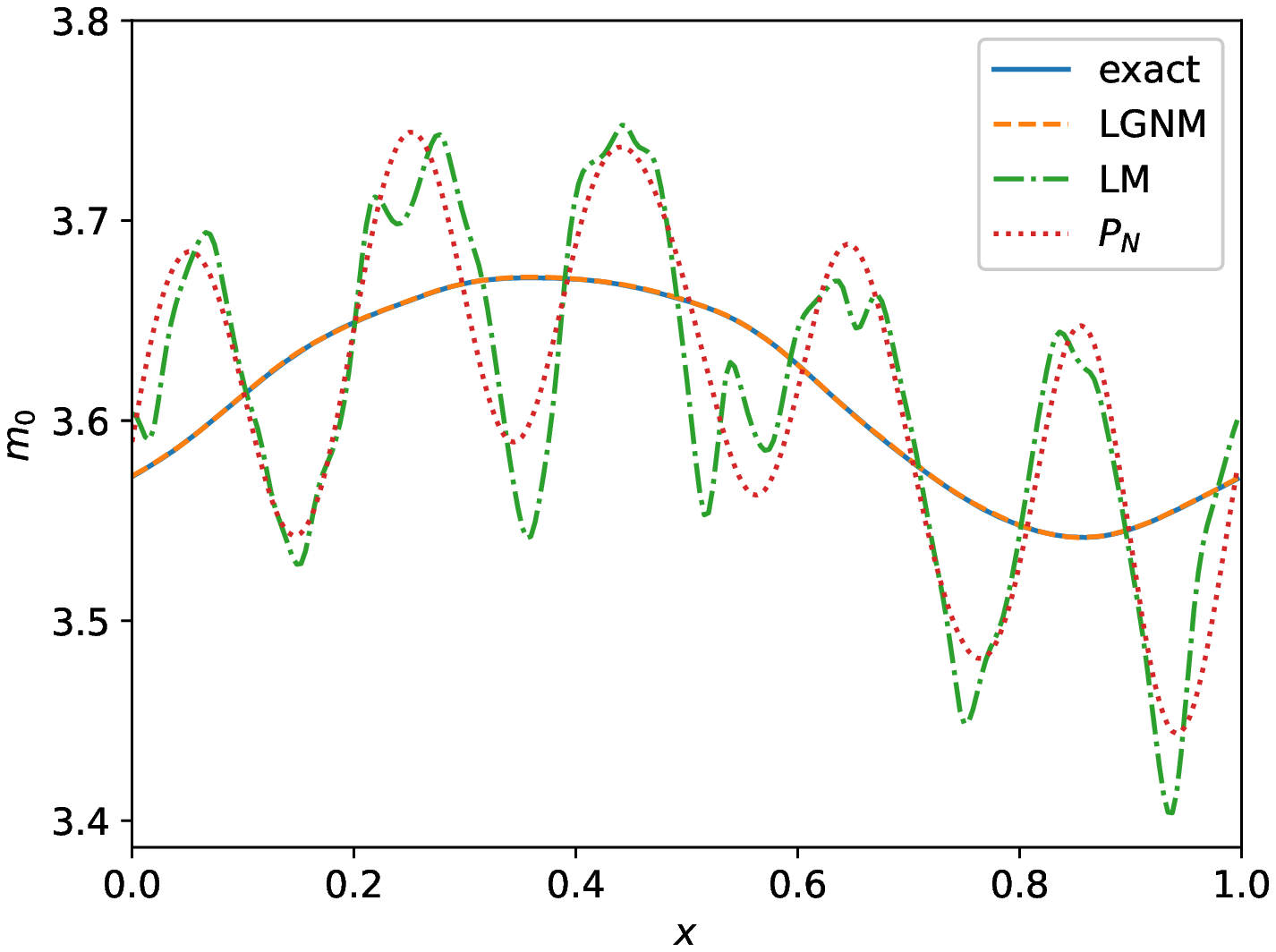}
    \end{minipage}
    }
    \subfigure[$m_1$, $\sigma_s = \sigma_t = 1$]{
    \begin{minipage}[b]{0.46\textwidth}    
    \includegraphics[width=1\textwidth]{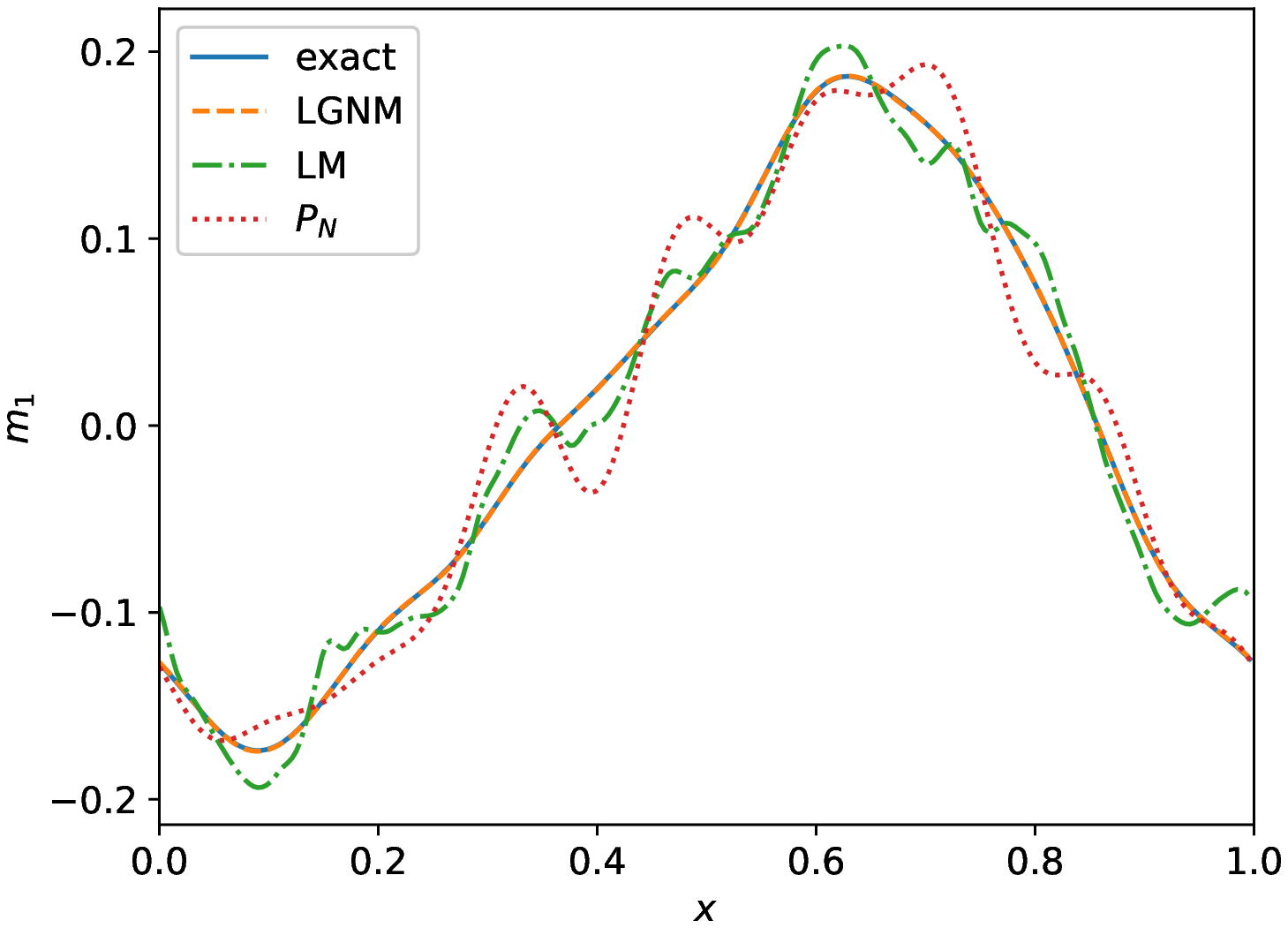}
    \end{minipage}
    }
    \caption{Example \ref{ex:const}: constant scattering and absorption coefficients.
    Here we are plotting the numerical solutions of $m_0$ and $m_1$ for three moment closures, including $P_N$, Learning Moment (LM) and Learning Gradient with Normalized Moments (LGNM), at $t=0.5$  with $N=5$ in the intermediate regime ($\sigma_s=\sigma_t=10$) and the optically thin regime ($\sigma_s=\sigma_t=1$). We note that the closure based on the LGNM performs better than the other two methods for the $N=5$ case.}
    \label{fig:profile-compare-N5}
\end{figure}

In Figure \ref{fig:error-compare-N1} and Figure \ref{fig:error-compare-N5}, we display the log–log scatter plots of the relative $L^2$ error versus the scattering coefficient for $N=1$ and $N=5$, respectively. In the case of $N=1$, we observe that building a closure model based on the LG and LGNM formulations does not always result in smaller errors than the $P_N$ closure, see Figure \ref{fig:error-compare-N1}. This indicates that it seems impossible to find a model with only two moments that will be able to  approximate the  moments  accurately in the optically thin case. In the case of $N=5$, the LG and LGNM models have a much smaller error than the LM model and the $P_N$ model, especially for smaller scattering coefficients, see  Figure \ref{fig:error-compare-N5}. Moreover, we notice that, in most cases, the model based on LGNM has better accuracy than the LG model. 
\begin{figure}
    \centering
    \subfigure[$m_0$]{
    \begin{minipage}[b]{0.46\textwidth}
    \includegraphics[width=1\textwidth]{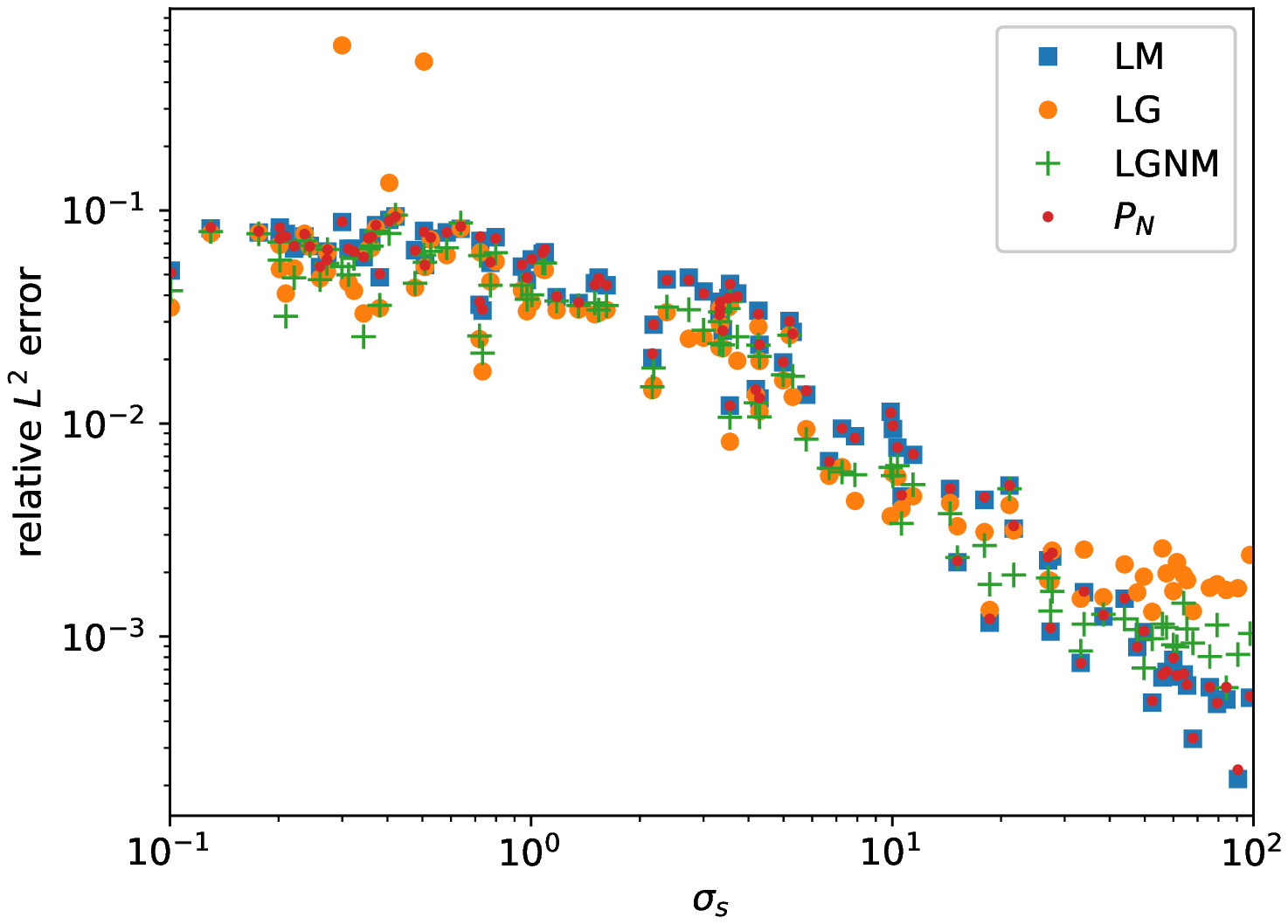}
    \end{minipage}
    }
    \subfigure[$m_1$]{
    \begin{minipage}[b]{0.46\textwidth}    
    \includegraphics[width=1\textwidth]{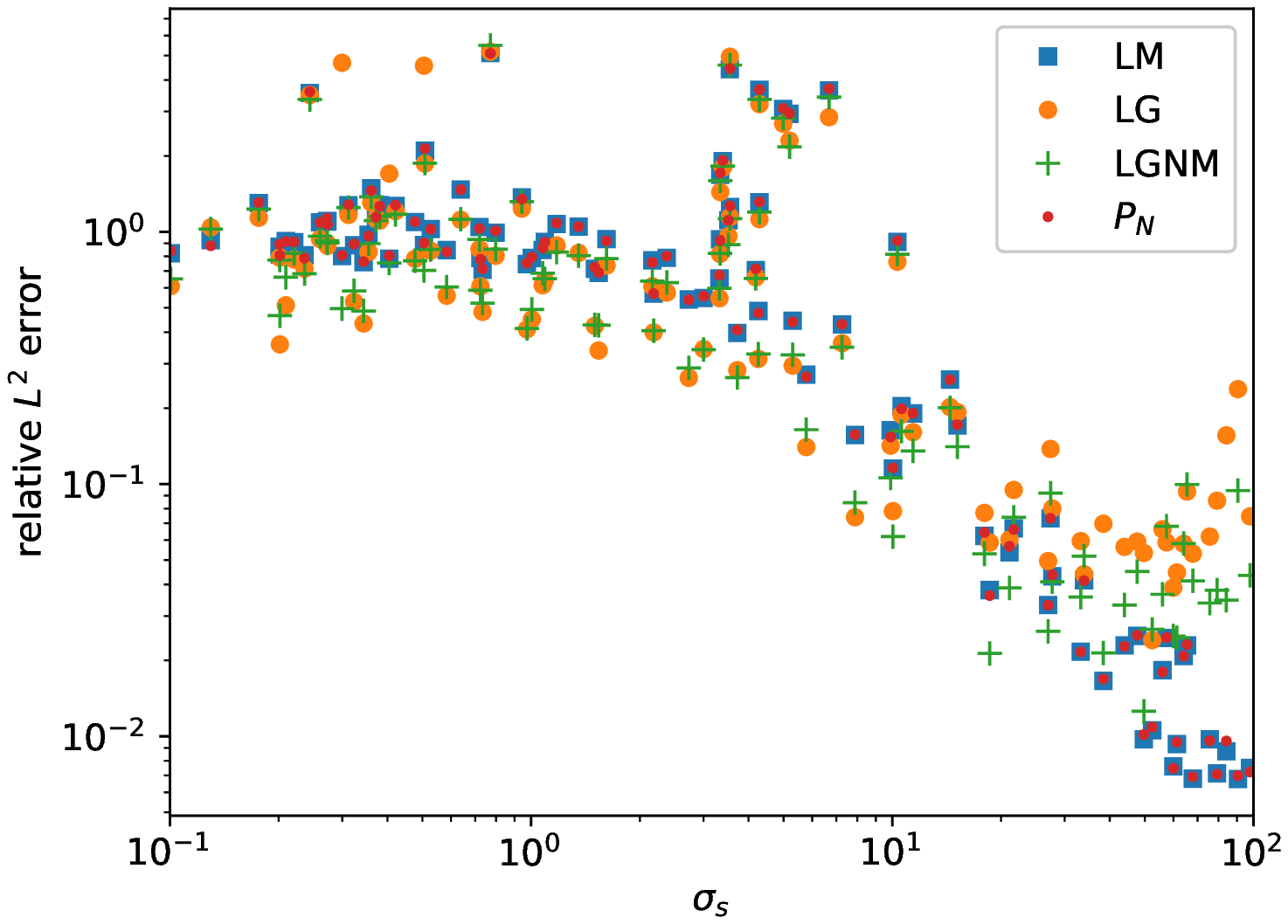}
    \end{minipage}
    }
    \caption{Example \ref{ex:const}: constant scattering and absorption coefficients. In this figure we are plotting the relative $L^2$ error of $m_0$ and $m_1$ with different scattering coefficient at $t=0.5$ with $N=1$.  All methods perform about the same for $N=1$.}
    \label{fig:error-compare-N1}
\end{figure}

\begin{figure}
    \centering
    \subfigure[$m_0$]{
    \begin{minipage}[b]{0.46\textwidth}
    \includegraphics[width=1\textwidth]{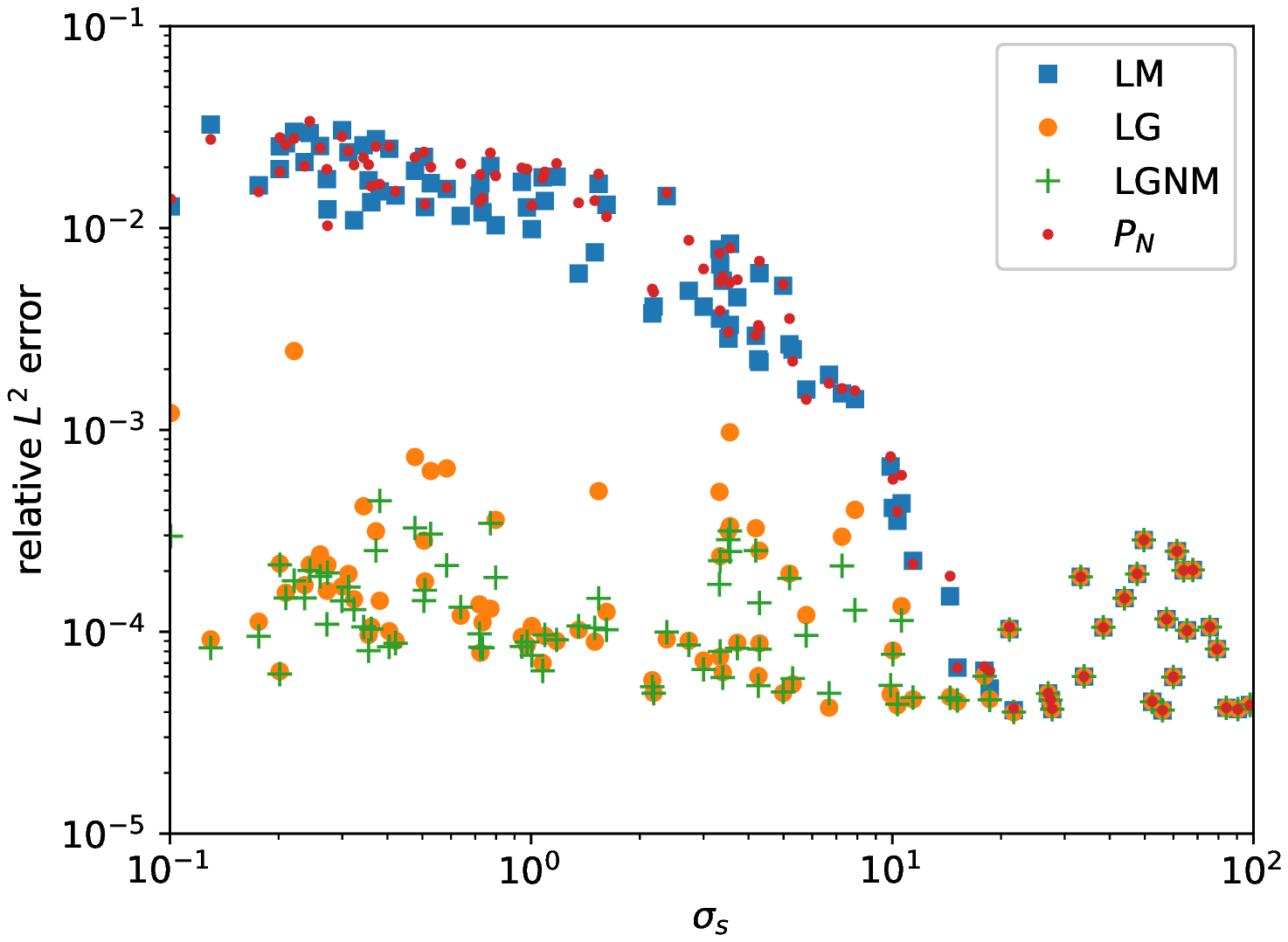}
    \end{minipage}
    }
    \subfigure[$m_1$]{
    \begin{minipage}[b]{0.46\textwidth}    
    \includegraphics[width=1\textwidth]{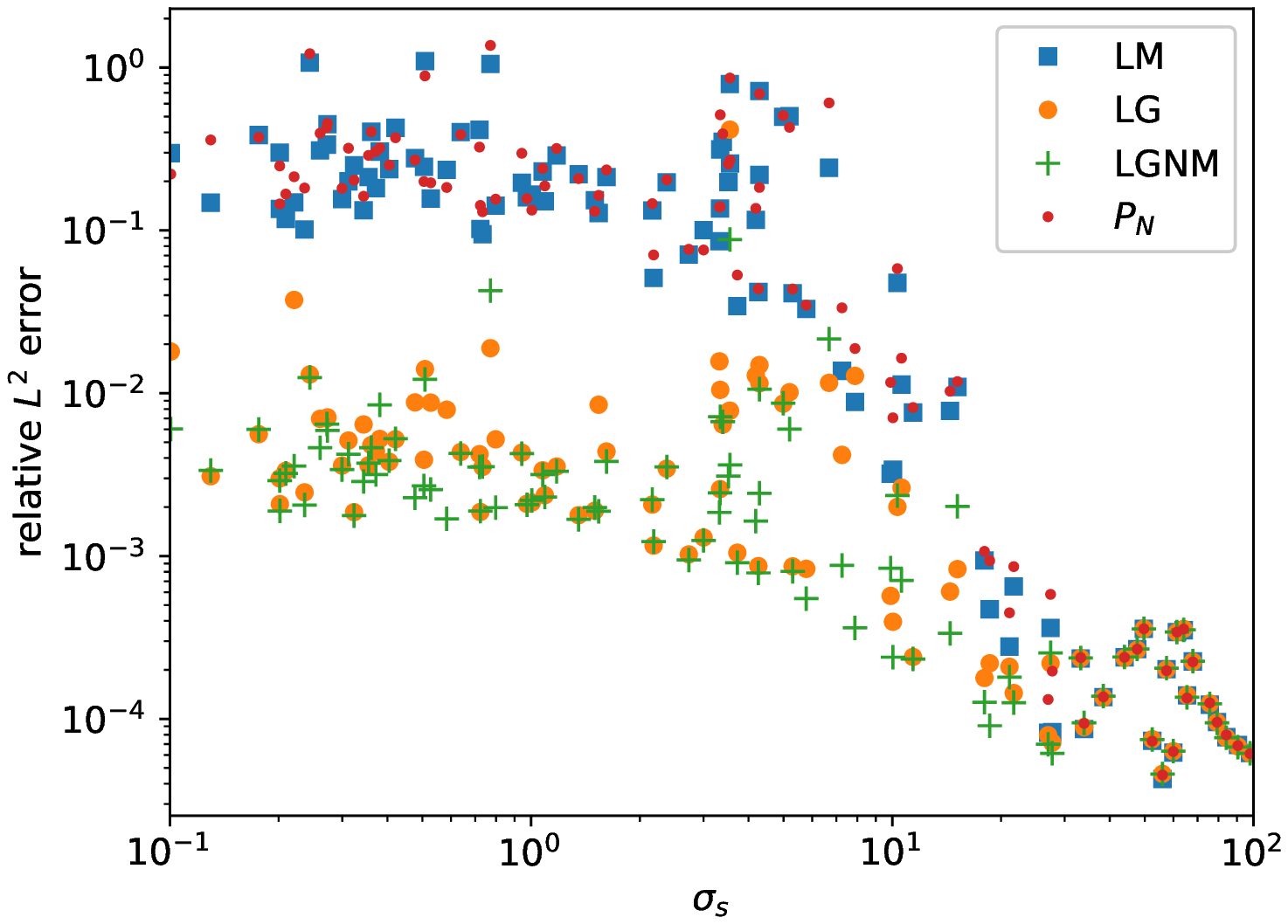}
    \end{minipage}
    }
    \caption{Example \ref{ex:const}: constant scattering and absorption coefficients. The figure plots the relative $L^2$ error of $m_0$ and $m_1$ with different scattering coefficient at $t=0.5$ with $N=5$. Here Learning Gradient (LG) and Learn Gradient with Normalized Moments (LGNM) methods perform better than Learning Moment (LM) and the $P_N$ closure.}
    \label{fig:error-compare-N5}
\end{figure}

Similar to many analytical closure models, our ML closure models do not guarantee  hyperbolicity. We now use this example to investigate the numerical stability of our closure model. We  take the penalty constant in the Lax-Friedrichs flux to be $\alpha_{\textrm{LF}}=2$. In Figure \ref{fig:eigenvalue-lax2}, we output the number of grid points with imaginary eigenvalues and $L^{\infty}$-norm of the numerical solution during the time evolution. We find that this model does not preserve the hyperbolicity property and the numerical solution starts to blow up at around $t=0.27$. We then increase the penalty constant to $\alpha_{\textrm{LF}}=5$. As shown in Figure \ref{fig:eigenvalue-lax5}, this helps stabilize the model numerically. Even though there are still a large number of grid points with imaginary eigenvalues during the time evolution, the numerical solution does not blow up.
We conclude that  incorporating hyperbolicity in the ML closure model is an important topic, which is discussed in a subsequent work \cite{huang2021ml2,huang2021ml3}.
\begin{figure}
    \centering
    \subfigure[number of grid points with imaginary eigenvalues]{
    \begin{minipage}[b]{0.46\textwidth}
    \includegraphics[width=1\textwidth]{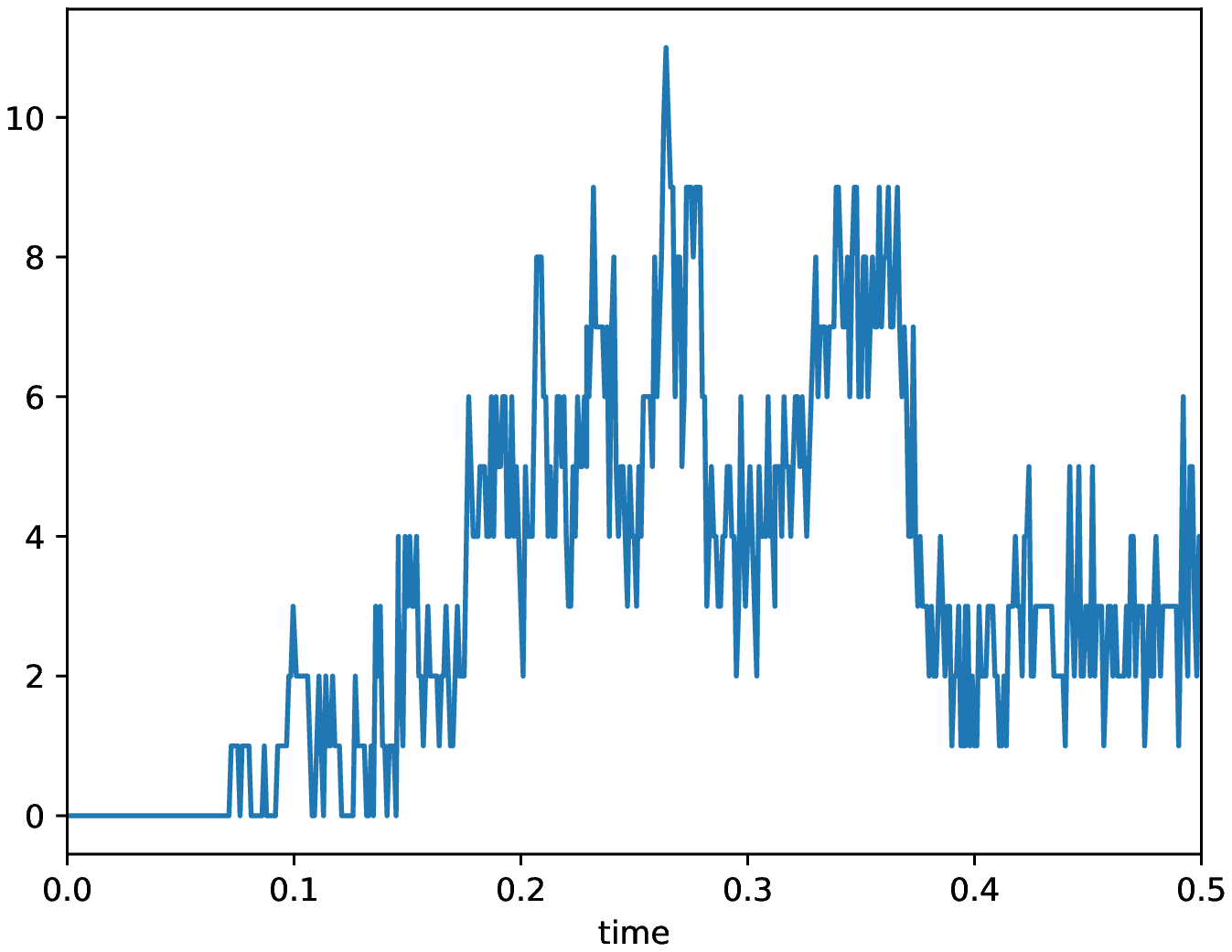}
    \end{minipage}
    }
    \subfigure[$L^{\infty}$ norm of numerical solutions]{
    \begin{minipage}[b]{0.46\textwidth}    
    \includegraphics[width=1\textwidth]{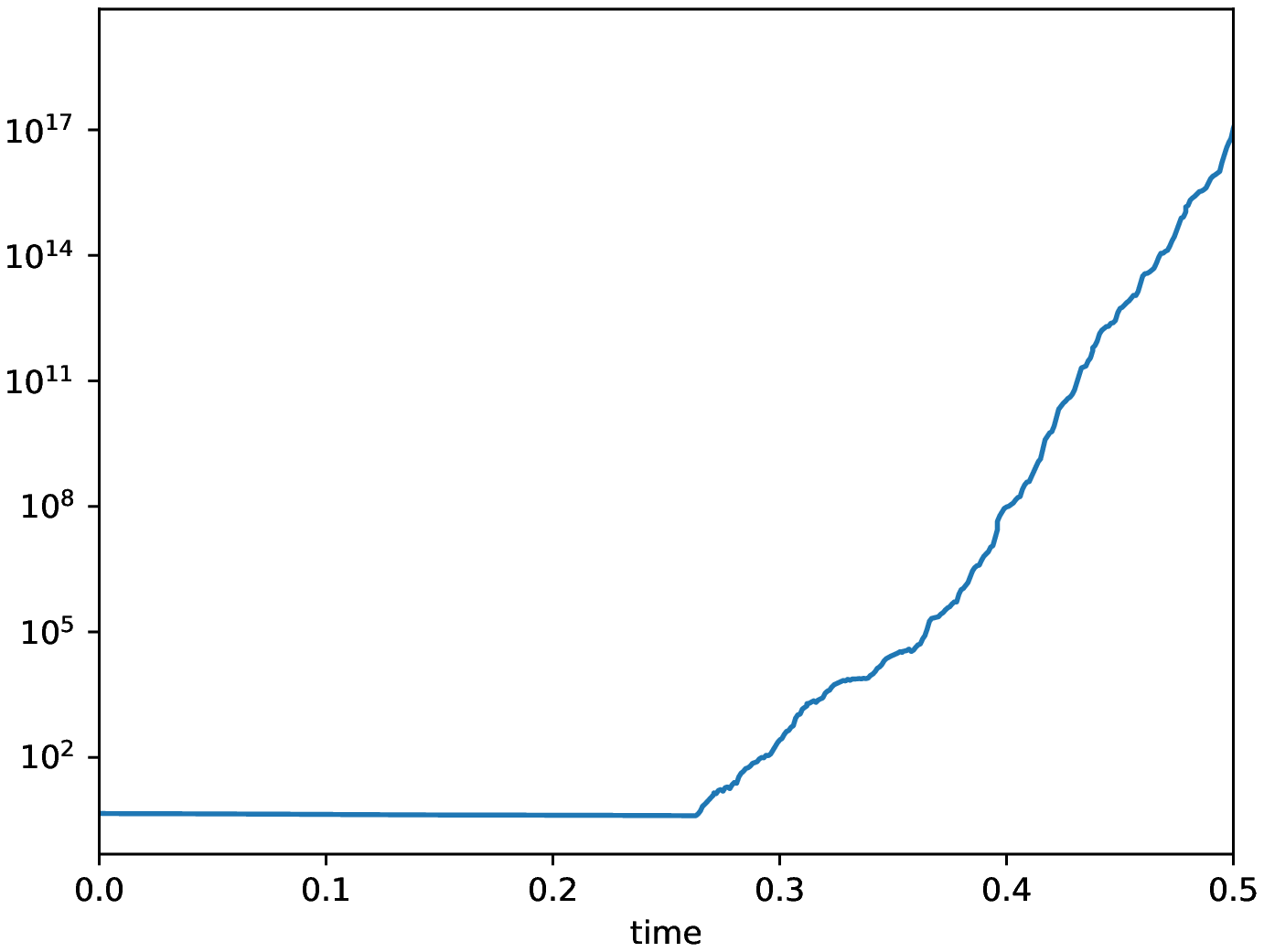}
    \end{minipage}
    }
    \caption{Example \ref{ex:const}: constant scattering and absorption coefficients. The number of grid points with imaginary eigenvalues and $L^{\infty}$ norm of numerical solutions during the time evolution in the optically thin regime ($\sigma_s = \sigma_t = 1$) with $N=5$ and $\alpha_{\textrm{LF}}=2$.}
    \label{fig:eigenvalue-lax2}
\end{figure}

\begin{figure}
    \centering
    \subfigure[number of grid points with imaginary eigenvalues]{
    \begin{minipage}[b]{0.46\textwidth}
    \includegraphics[width=1\textwidth]{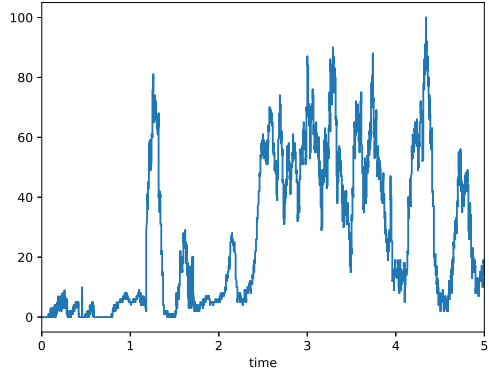}
    \end{minipage}
    }
    \subfigure[$L^{\infty}$ norm of numerical solutions]{
    \begin{minipage}[b]{0.46\textwidth}    
    \includegraphics[width=1\textwidth]{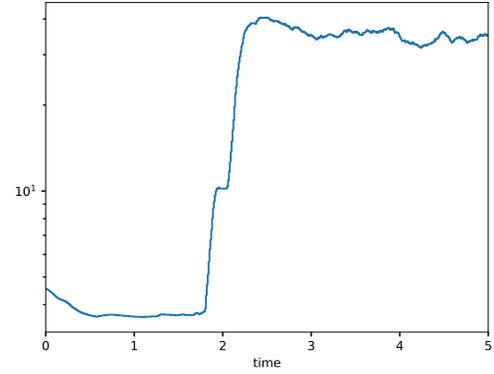}
    \end{minipage}
    }
    \caption{Example \ref{ex:const}: constant scattering and absorption coefficients. The number of grid points with imaginary eigenvalues and $L^{\infty}$ norm of numerical solutions during the time evolution in the optically thin regime ($\sigma_s = \sigma_t = 1$) with $N=5$ and $\alpha_{\textrm{LF}}=5$.}
    \label{fig:eigenvalue-lax5}
\end{figure}

\end{exam}

\begin{exam}[variable scattering  problem]\label{ex:scatter-vary}
	In this example, we investigate the performance and generalizability  of our closure models by testing them on problems that have spatially varying scattering coefficients. In these tests, the scattering coefficient is taken to have the following   form
	\begin{equation}\label{eq:vary-scatter-function}
		\sigma_s(x) = c_1 (\tanh(1 + c_2 (x - x_0)) + \tanh(1 - c_2 (x - x_0))) + \sigma_{s,\textrm{base}},
	\end{equation}
	with $c_1$, $c_2$, $\sigma_{s,\textrm{base}}$ and $x_0$ being constants. Here, the parameters are taken to be $c_1=c_2=15$, $x_0=0.5$ and $\sigma_a=1$. We test two cases with $\sigma_{s,\textrm{base}}=1$ and $\sigma_{s,\textrm{base}}=10$. The profiles of scattering coefficients with $\sigma_{s,\textrm{base}}=1$ and $\sigma_{s,\textrm{base}}=10$ are shown in Figure \ref{fig:scattering-profile}. It is observed that, in the case of $\sigma_{s,\textrm{base}}=1$, the middle part of the domain is in the intermediate regime and the domain near the boundary is in the optically thin regime. On the other hand, the whole domain is in the intermediate regime when $\sigma_{s,\textrm{base}}=10$. 
	\begin{figure}
	    \centering
	    \subfigure[$\sigma_{s,\textrm{base}}=1$]{
	    \begin{minipage}[b]{0.46\textwidth}
	    \includegraphics[width=1\textwidth]{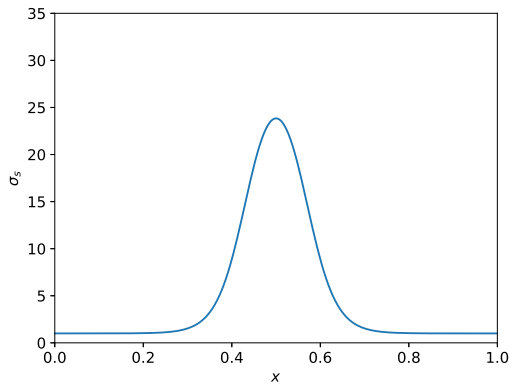}
	    \end{minipage}
	    }
	    \subfigure[$\sigma_{s,\textrm{base}}=10$]{
	    \begin{minipage}[b]{0.46\textwidth}    
	    \includegraphics[width=1\textwidth]{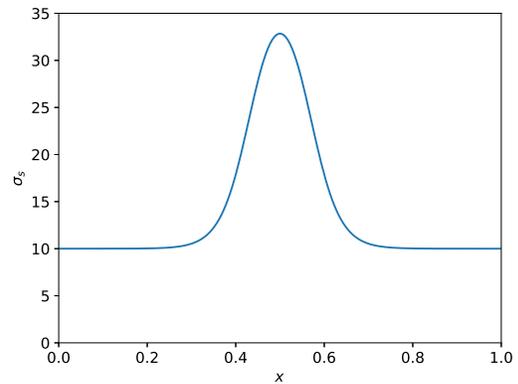}
	    \end{minipage}
	    }
	    \caption{Example \ref{ex:scatter-vary}: variable scattering problem. Plots of the profiles of scattering coefficient functions given in equation \eqref{eq:vary-scatter-function}, with $\sigma_{s,\textrm{base}}=1$ and $\sigma_{s,\textrm{base}}=10$ respectively.}
	    \label{fig:scattering-profile}
	\end{figure}

	The profiles of numerical solutions with $\sigma_{s,\textrm{base}}=1$ and $\sigma_{s,\textrm{base}}=10$ are presented in Figure \ref{fig:scatter-vary-compare-sigmas1} and Figure \ref{fig:scatter-vary-compare-sigmas10}, respectively. In the test case that spans the optically thin to intermediate regime, we observe good agreement between  our LGNM closure model and the kinetic model, see Figure \ref{fig:scatter-vary-compare-sigmas1}, while the other two closure models do not obtain satisfactory solution profiles. In Figure \ref{fig:scatter-vary-compare-sigmas10}, we plot the results when $\sigma_{s,\textrm{base}}=10$.   In this  case, all closure models work well. We note that our LGNM model still has the smallest error.
	\begin{figure}
	    \centering
	    \subfigure[$m_0$]{
	    \begin{minipage}[b]{0.46\textwidth}
	    \includegraphics[width=1\textwidth]{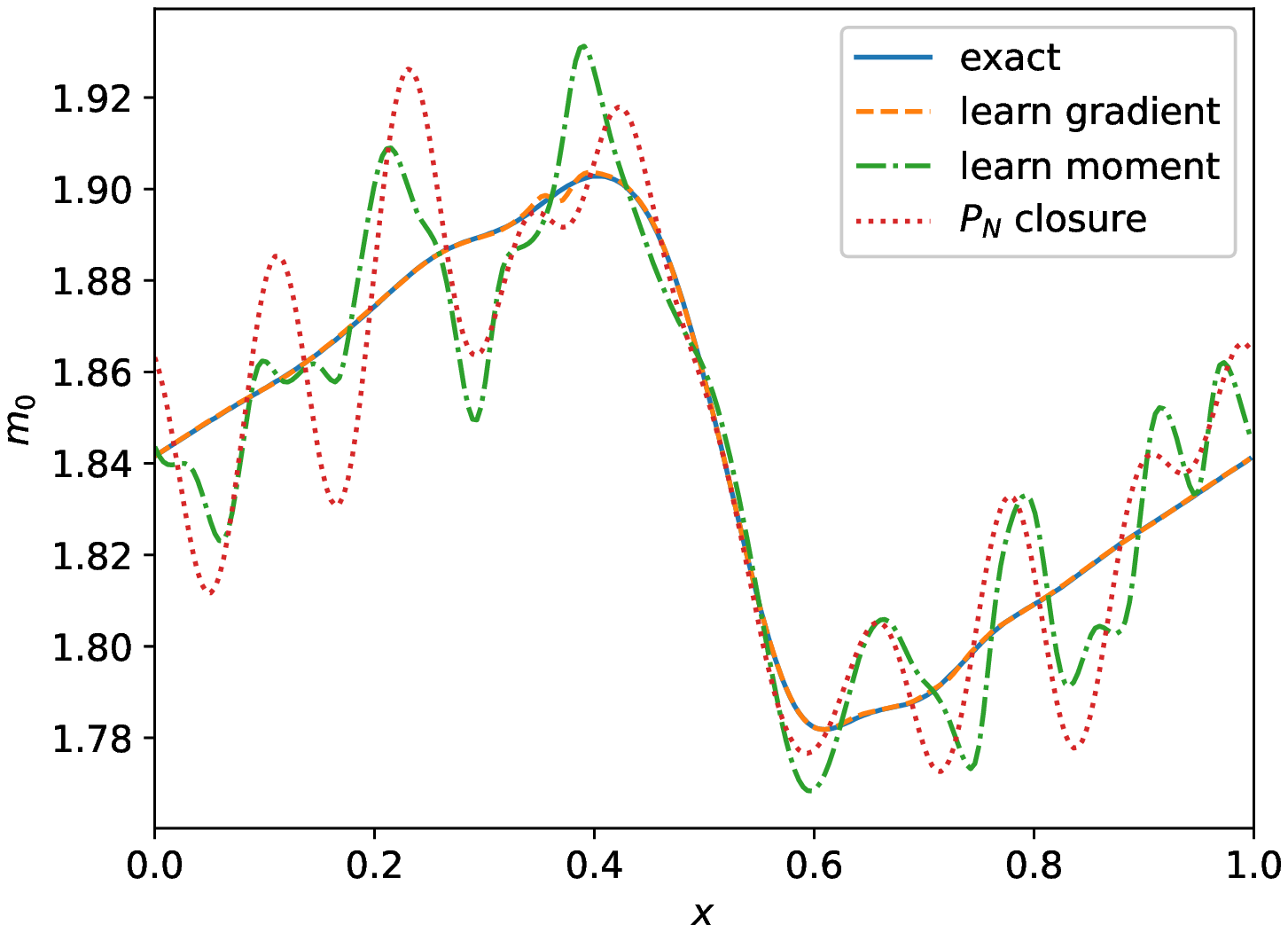}
	    \end{minipage}
	    }
	    \subfigure[$m_1$]{
	    \begin{minipage}[b]{0.46\textwidth}    
	    \includegraphics[width=1\textwidth]{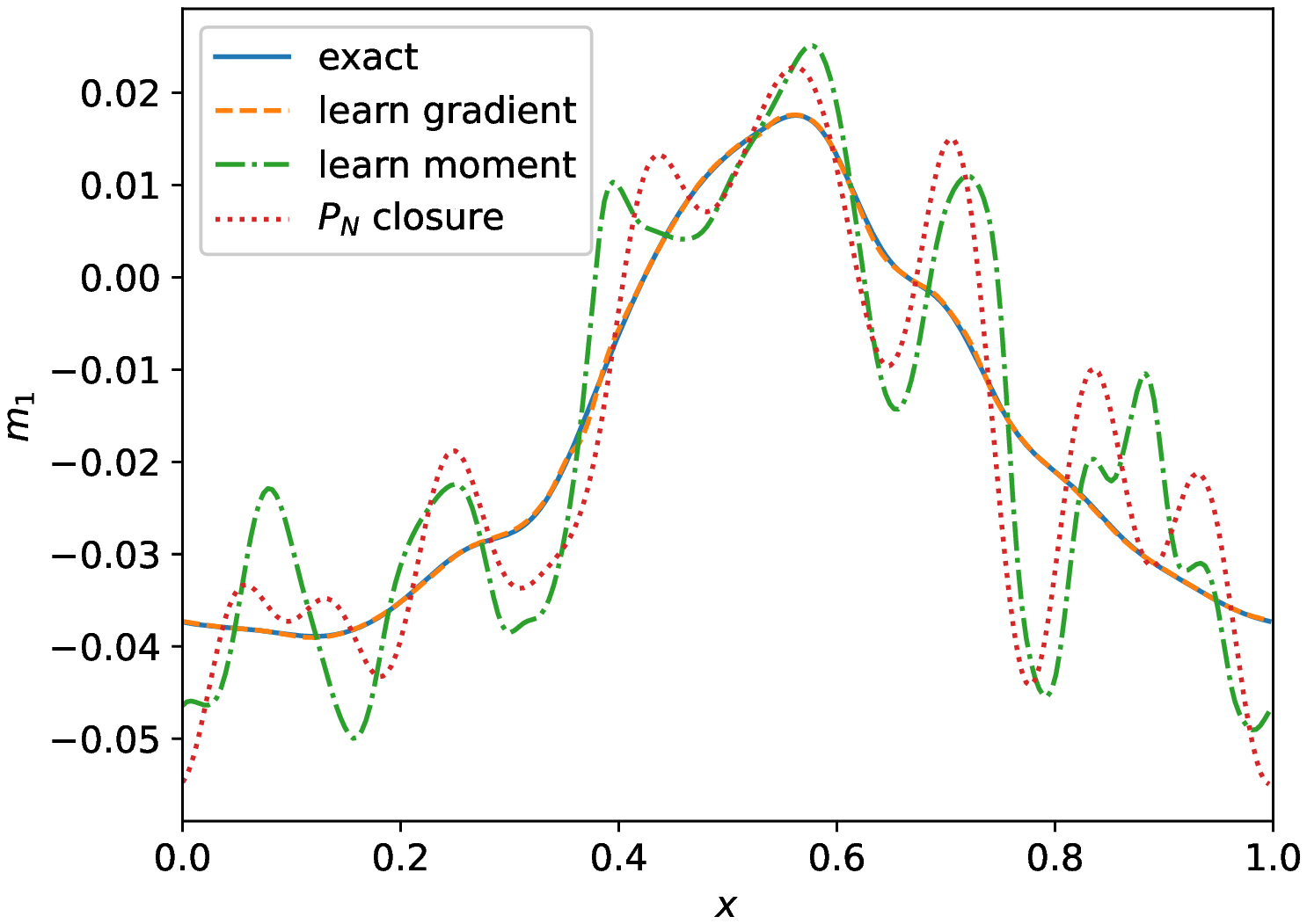}
	    \end{minipage}
	    }
	    \caption{Example \ref{ex:scatter-vary}: variable scattering problem. Numerical solutions of $m_0$ and $m_1$ at $t=0.5$ with $N=5$ and $\sigma_{s,\textrm{base}}=1$.}
	    \label{fig:scatter-vary-compare-sigmas1}
	\end{figure}
	\begin{figure}
	    \centering
	    \subfigure[$m_0$]{
	    \begin{minipage}[b]{0.46\textwidth}
	    \includegraphics[width=1\textwidth]{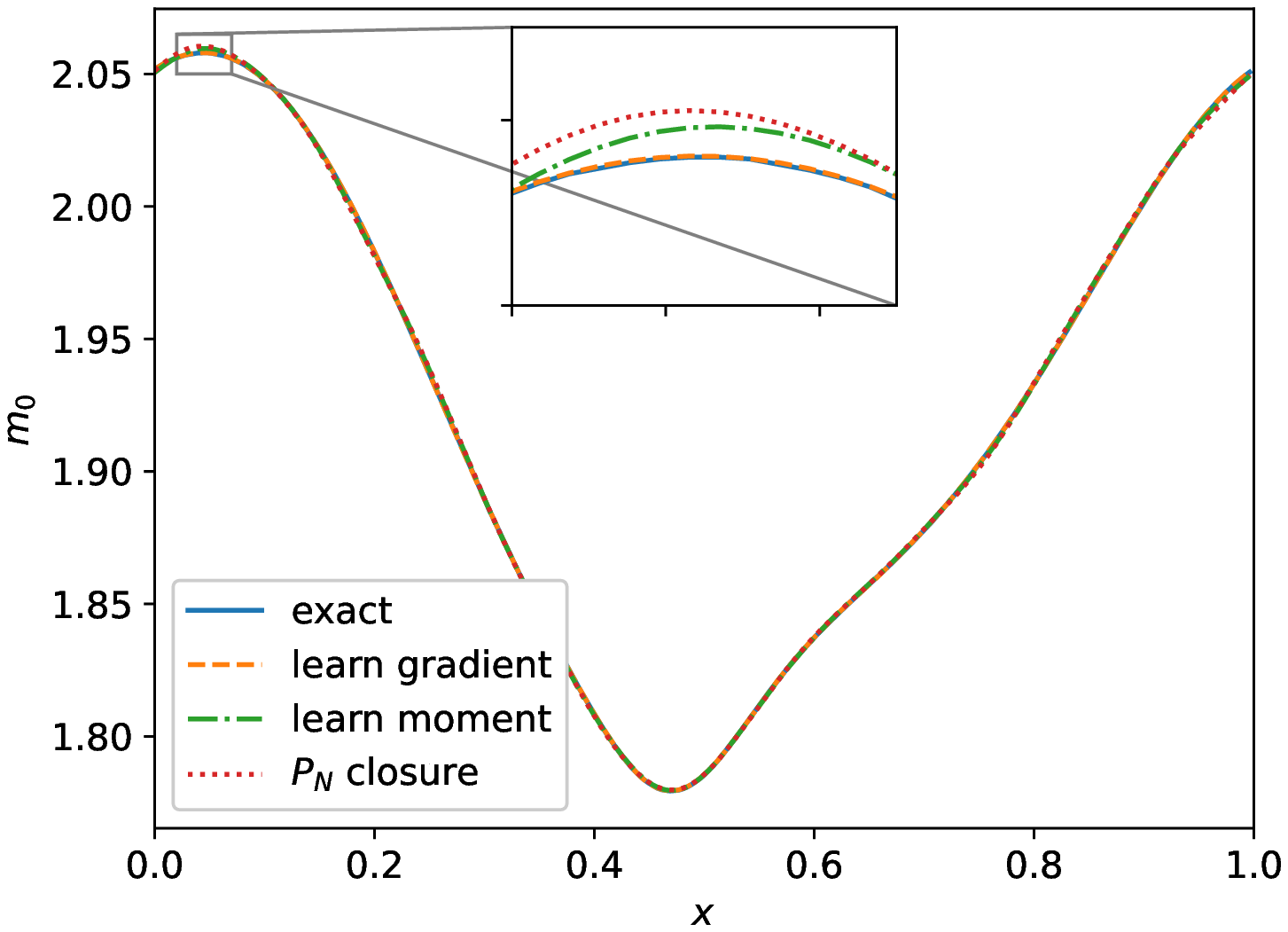}
	    \end{minipage}
	    }
	    \subfigure[$m_1$]{
	    \begin{minipage}[b]{0.46\textwidth}    
	    \includegraphics[width=1\textwidth]{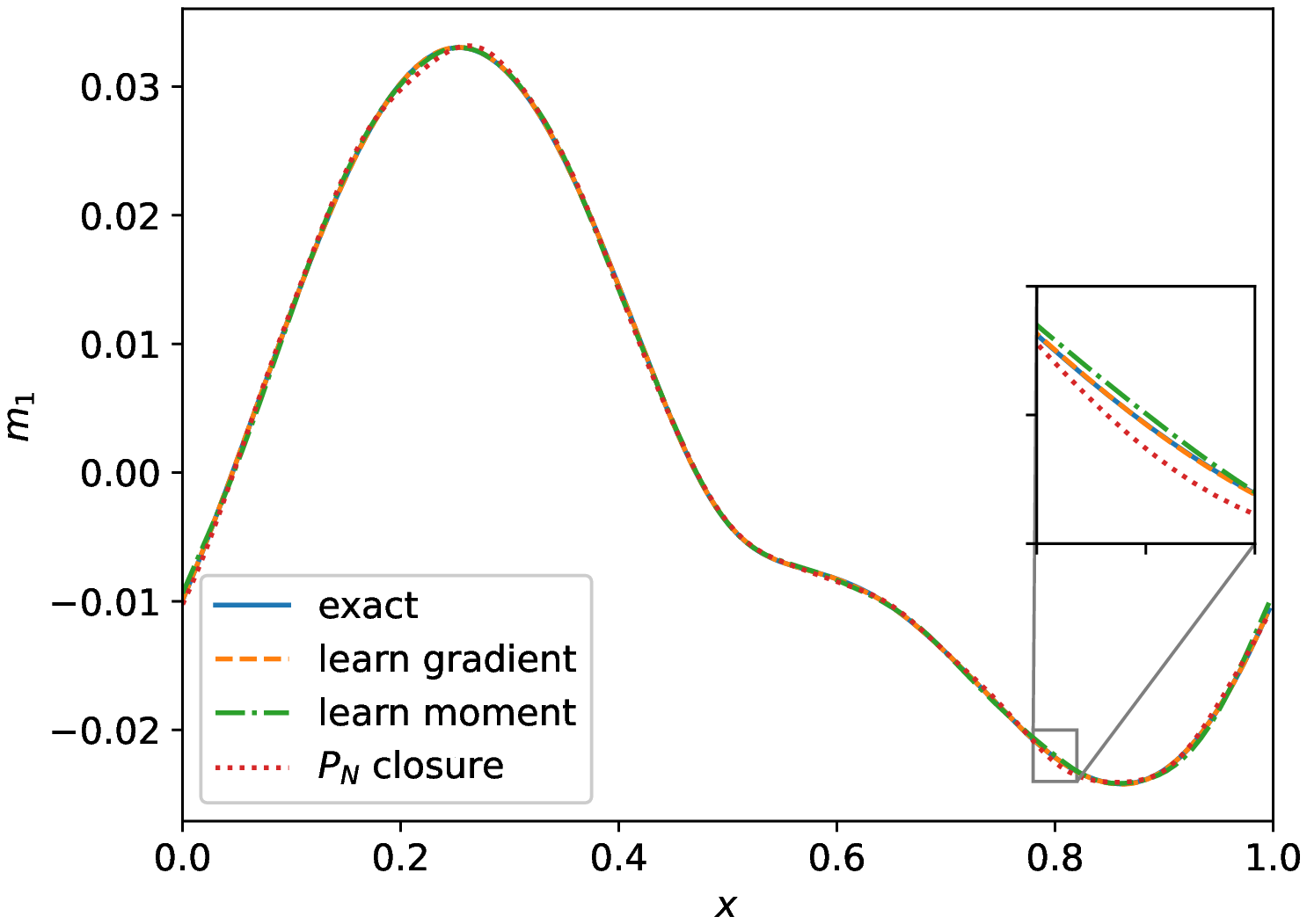}
	    \end{minipage}
	    }
	    \caption{Example \ref{ex:scatter-vary}: variable scattering  problem.
	    Numerical solutions of $m_0$ and $m_1$ at $t=0.5$ with $N=5$ and $\sigma_{s,\textrm{base}}=10$.}
	    \label{fig:scatter-vary-compare-sigmas10}
	\end{figure}	
\end{exam}

\begin{exam}[Gaussian source problem]\label{ex:gauss-source}
	In this example, we investigate the Gaussian source problem, which simulates particles with an initial   intensity that is a Gaussian distribution in space \cite{frank2012perturbed,fan2020nonlinear}
	\begin{equation}\label{eq:gauss-source-init}
		f_0(x,v) = \frac{c_1}{(2 \pi \theta)^{1/2}} \exp\brac{-\frac{(x - x_0)^2}{2 \theta}} + c_2.
	\end{equation}
	In this test, we take $c_1=0.5$, $c_2=2.5$, $x_0=0.5$ and $\theta=0.01$. 

	In Figure \ref{fig:gauss-source-compare}, we present the results obtained using various closure models. We observe good agreement between the LGNM closure model and the kinetic model, while the other two closure models have large deviations from the kinetic model. This illustrates that the ML closure model exhibits good generalization to other types of initial conditions beyond the training data.
	\begin{figure}
	    \centering
	    \subfigure[$m_0$]{
	    \begin{minipage}[b]{0.46\textwidth}
	    \includegraphics[width=1\textwidth]{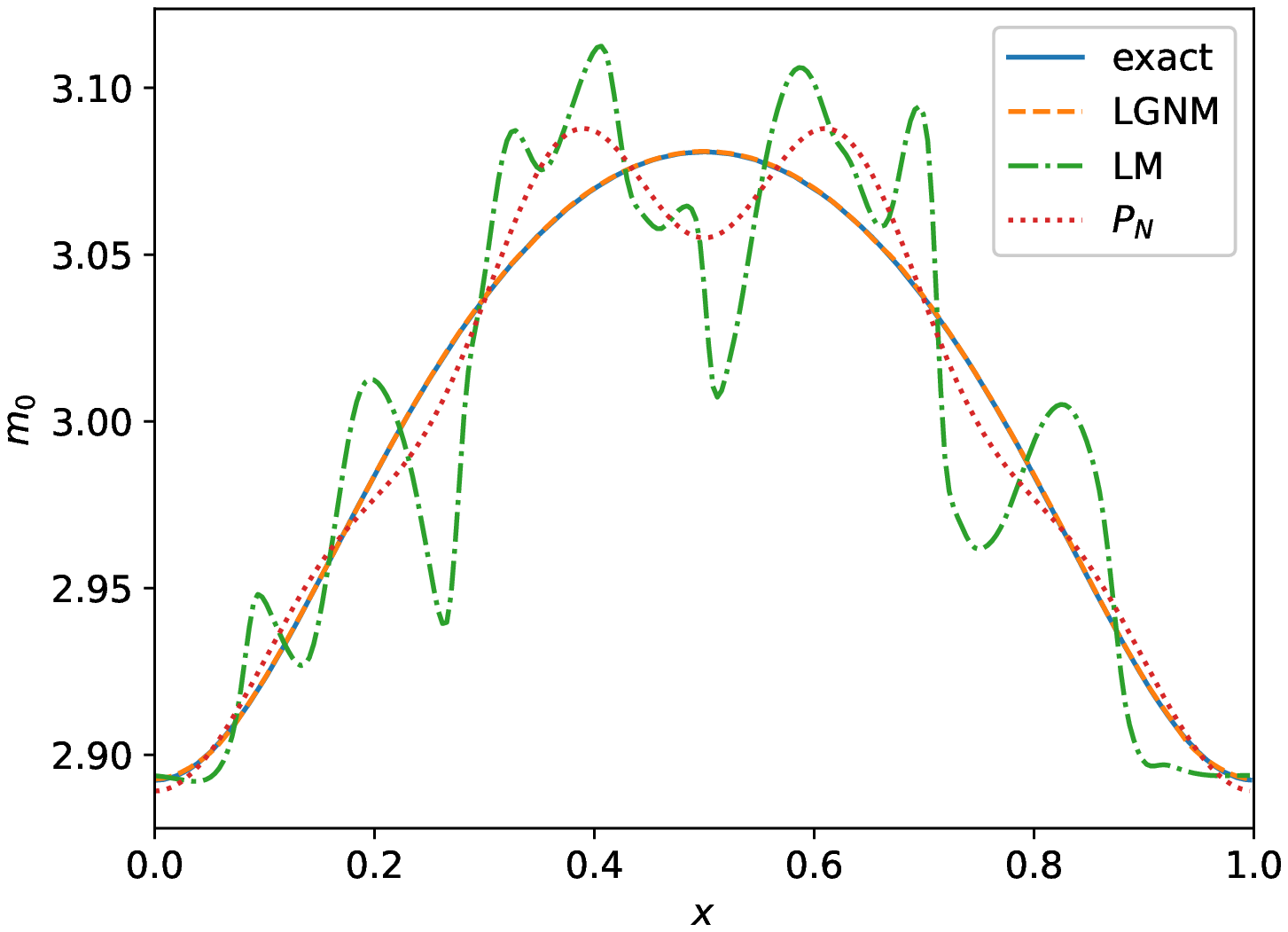}
	    \end{minipage}
	    }
	    \subfigure[$m_1$]{
	    \begin{minipage}[b]{0.46\textwidth}    
	    \includegraphics[width=1\textwidth]{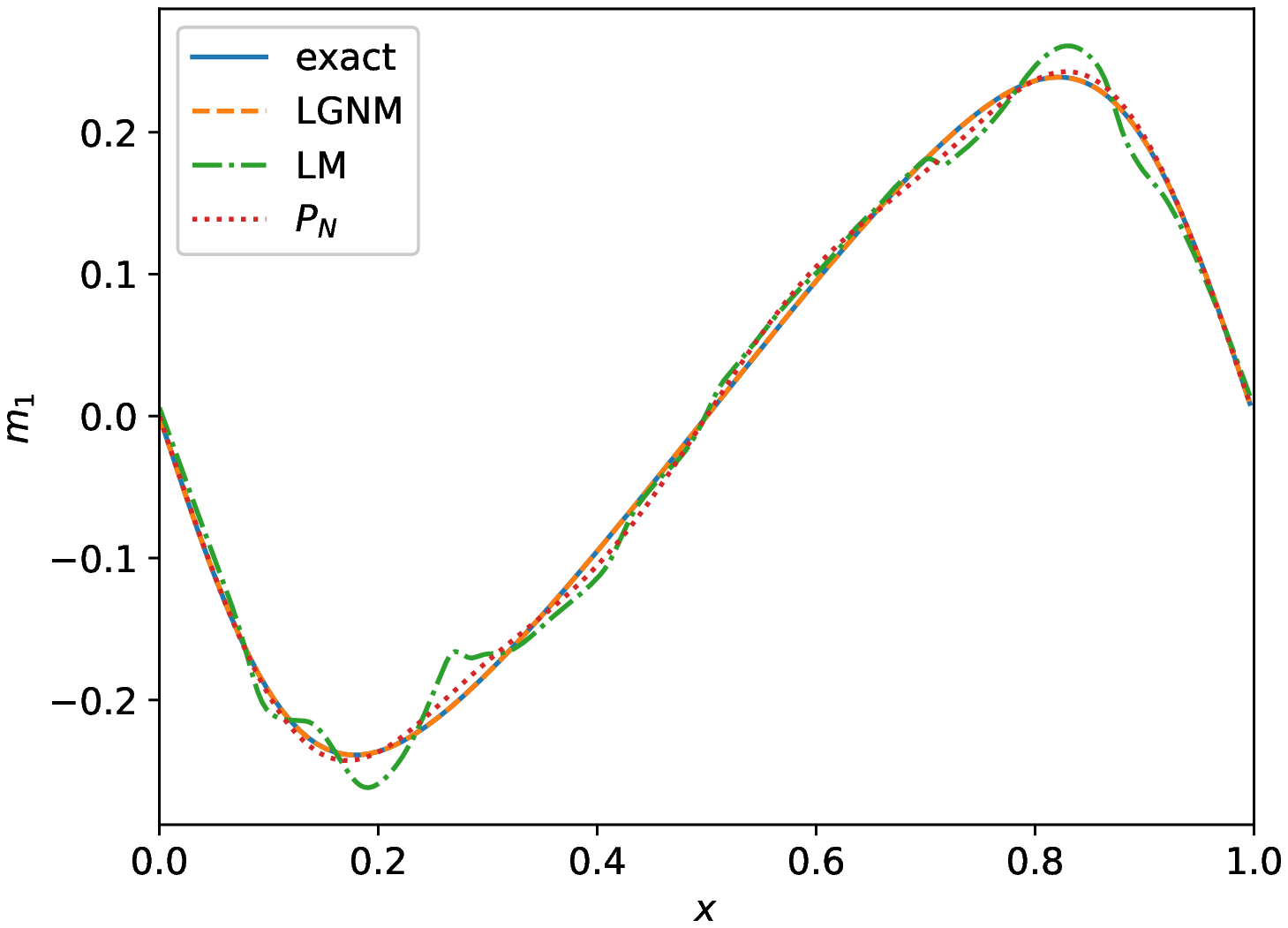}
	    \end{minipage}
	    }
	    \caption{Example \ref{ex:gauss-source}: Gaussian source problem. Numerical solutions of $m_0$ and $m_1$ at $t=0.5$ with $N=5$ in the optically thin regime ($\sigma_s=1$).}
	    \label{fig:gauss-source-compare}
	\end{figure}

	From \eqref{eq:init-fourier-series}, it is easy to see that the upper bound of the initial conditions in our training dataset is $1 + 2\sum_{k=1}^{k_{\max}}\frac{1}{k} \approx 6.86$.
	To investigate the generalization  of our model to problems that have a magnitude that is outside of the  range of training data, we amplify the initial condition \eqref{eq:gauss-source-init} by 1000 times.  That is, in this test we take the initial condition  to be 
	\begin{equation}\label{eq:gauss-source-init-amplify}
		f_0(x,v) = 1000\brac{\frac{c_1}{(2 \pi \theta)^{1/2}} \exp\brac{-\frac{(x - x_0)^2}{2 \theta}} + c_2}
	\end{equation}
	with the same parameters $c_1$, $c_2$, $x_0$ and $\theta$ as in \eqref{eq:gauss-source-init}.
	For the original initial condition \eqref{eq:gauss-source-init}, the ML closure model based on the LG and LGNM both predict the solution well, see Figure \ref{fig:gauss-source-scale} (a). The relative $L^2$ errors are $1.05\times10^{-4}$ and $6.42\times10^{-5}$ for LG and LGNM approaches respectively. When we amplify the initial condition by 1000 times, the two models still generate good predictions, see Figure \ref{fig:gauss-source-scale} (b).  The relative $L^2$ errors are $3.89\times10^{-4}$ and $6.38\times10^{-5}$ for LG and LGNM approaches respectively. This implies that the model without the scale invariance constraint automatically learns this invariance property from data. Nevertheless, enforcing the scale invariance property will lead to better results when the magnitude of the problem initial conditions is very different from that used in the training dataset.
	\begin{figure}
	    \centering
	    \subfigure[original initial condition]{
	    \begin{minipage}[b]{0.46\textwidth}
	    \includegraphics[width=1\textwidth]{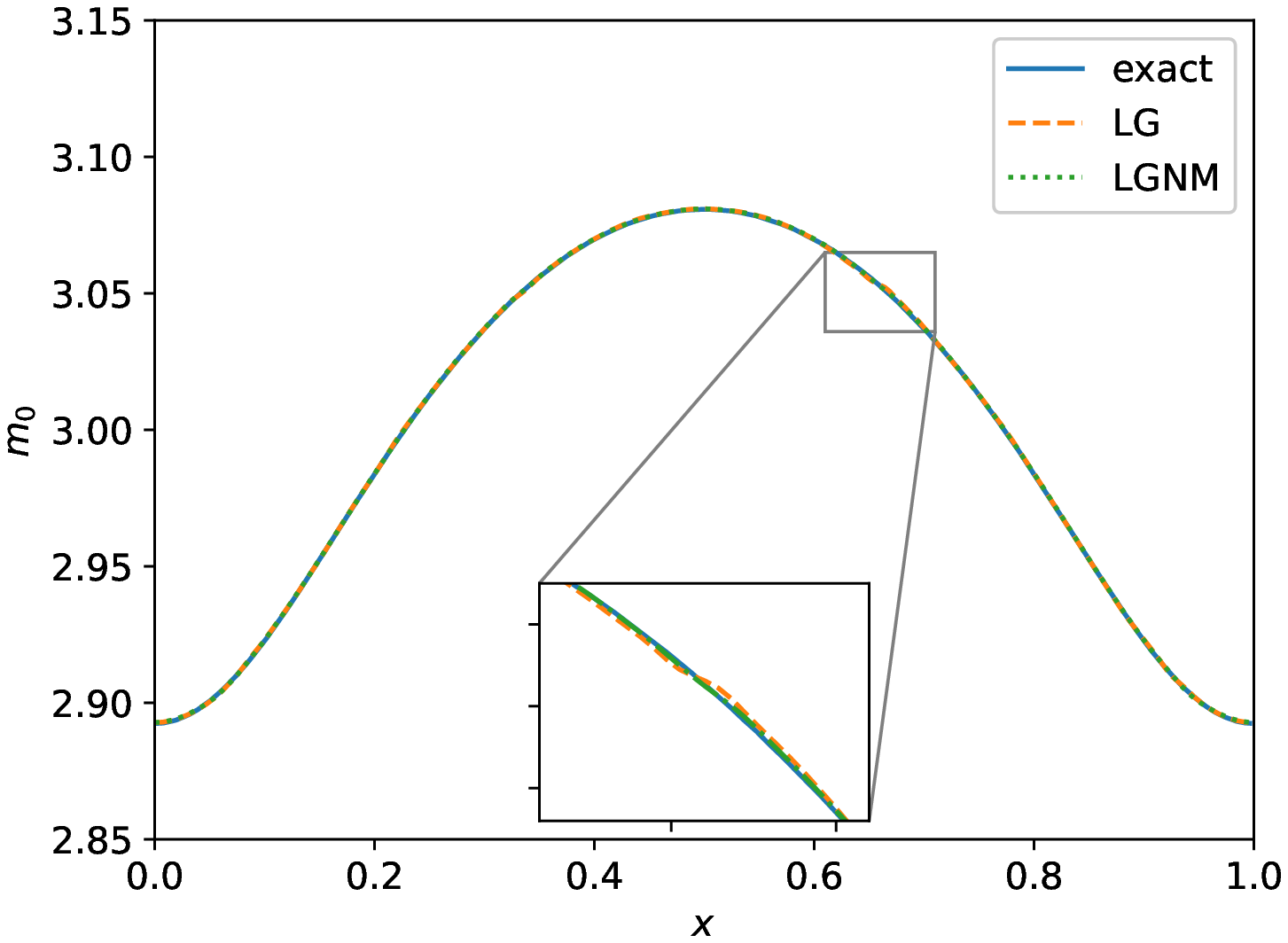}
	    \end{minipage}
	    }
	    \subfigure[original initial condition $\times$ 1000]{
	    \begin{minipage}[b]{0.46\textwidth}    
	    \includegraphics[width=1\textwidth]{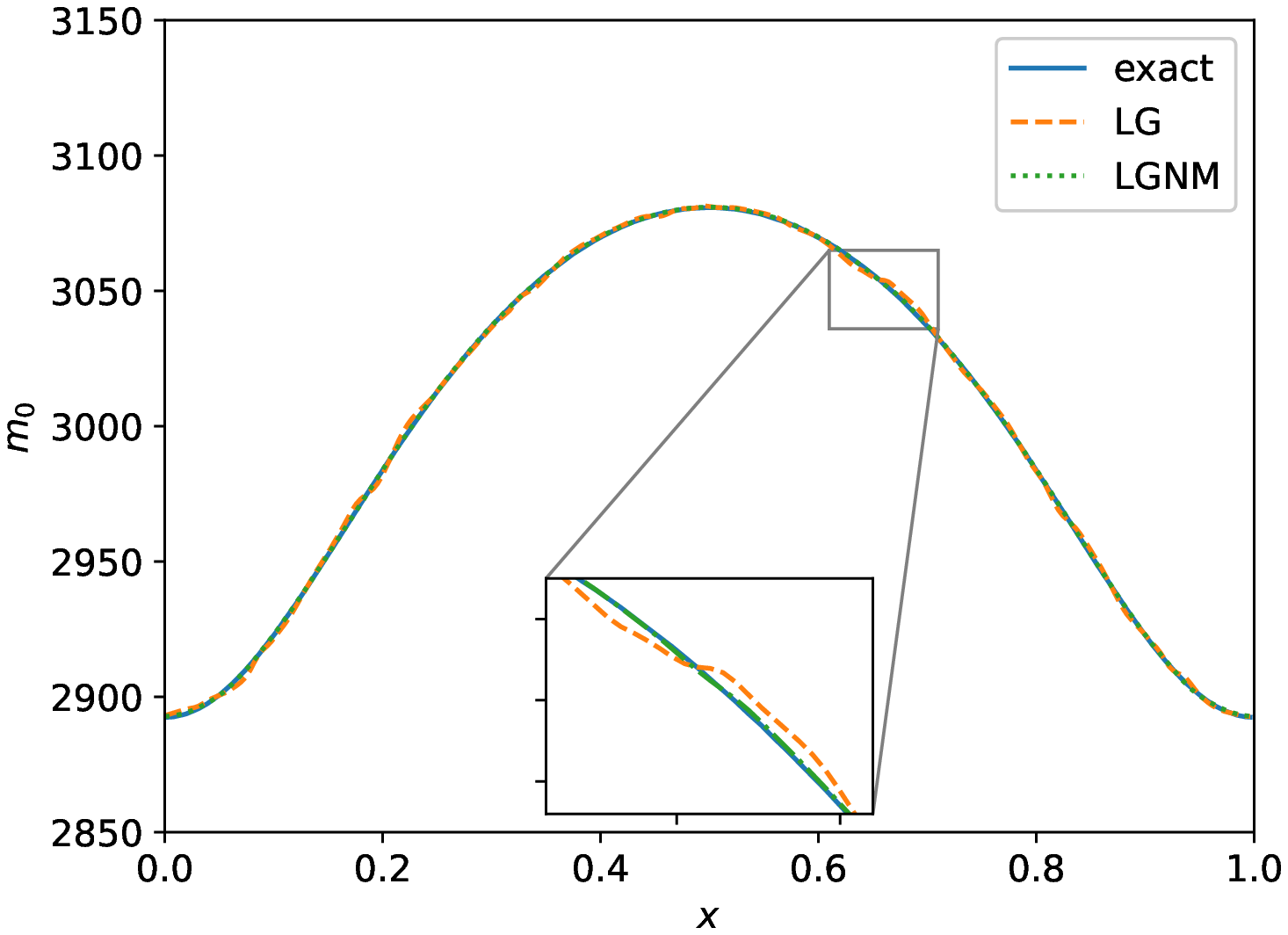}
	    \end{minipage}
	    }
	    \caption{Example \ref{ex:gauss-source}: Gaussian source problem. Numerical solutions of $m_0$ and $m_1$ at $t=0.5$ with $N=5$ in the optically thin regime ($\sigma_s=1$). Left: results from the original initial condition, equation   \eqref{eq:gauss-source-init}; right: results from the original initial condition $\times$ 1000,  equation \eqref{eq:gauss-source-init-amplify}.}
	    \label{fig:gauss-source-scale}
	\end{figure}

	{
	We also test the performance of the ML closure model with non-periodic boundary conditions. In the initial Gaussian distribution \eqref{eq:gauss-source-init}, we take $c_1=0.5$, $c_2=10^{-6}$, $x_0=0.6$ and $\theta=0.005$. The reflective boundary conditions are imposed on both the left and right boundaries. Assume that the grid points in the interior domain are $x_j=(j+\frac{1}{2})\Delta x$ with $j=0,\cdots,N_x-1$. To treat the reflective boundary conditions for the moment closure system, we set several ghost points outside of the domain: $x_j=(j+\frac{1}{2})\Delta x$ with $j=-3,-2,-1$ on the left and $j=N_x,N_x+1,N_x+2$ on the right. In each time step, the moments at the ghost points are updated by
	\begin{equation}
		m_k(x_j,t_n) = (-1)^k m_{k}(x_{-j-1},t_n), \quad j=-3,-2,-1
	\end{equation}
	and
	\begin{equation}
		m_k(x_j,t_n) = (-1)^k m_{k}(x_{2N_x-j-1},t_n), \quad j=N_x,N_x+1,N_x+2.
	\end{equation}

	The numerical results with $N=5$ in the optically thin regime ($\sigma_s=\sigma_t=1$) are shown in Figure \ref{fig:gauss-source-reflect-bc}. We observe that the $P_N$ closure has large deviations from the exact solution to the kinetic model. The $FP_N$ closure is more accurate than $P_N$ in $m_0$ and $m_1$, but has relatively larger error compared to that of the LGNM model, especially near the boundaries. Moreover, the LGNM model predicts the higher order moments $m_4$ and $m_5$ much more accurately than both the $P_N$ and $FP_N$ models. 
	\begin{figure}
	    \centering
	    \subfigure[$m_0$]{
	    \begin{minipage}[b]{0.46\textwidth}
	    \includegraphics[width=1\textwidth]{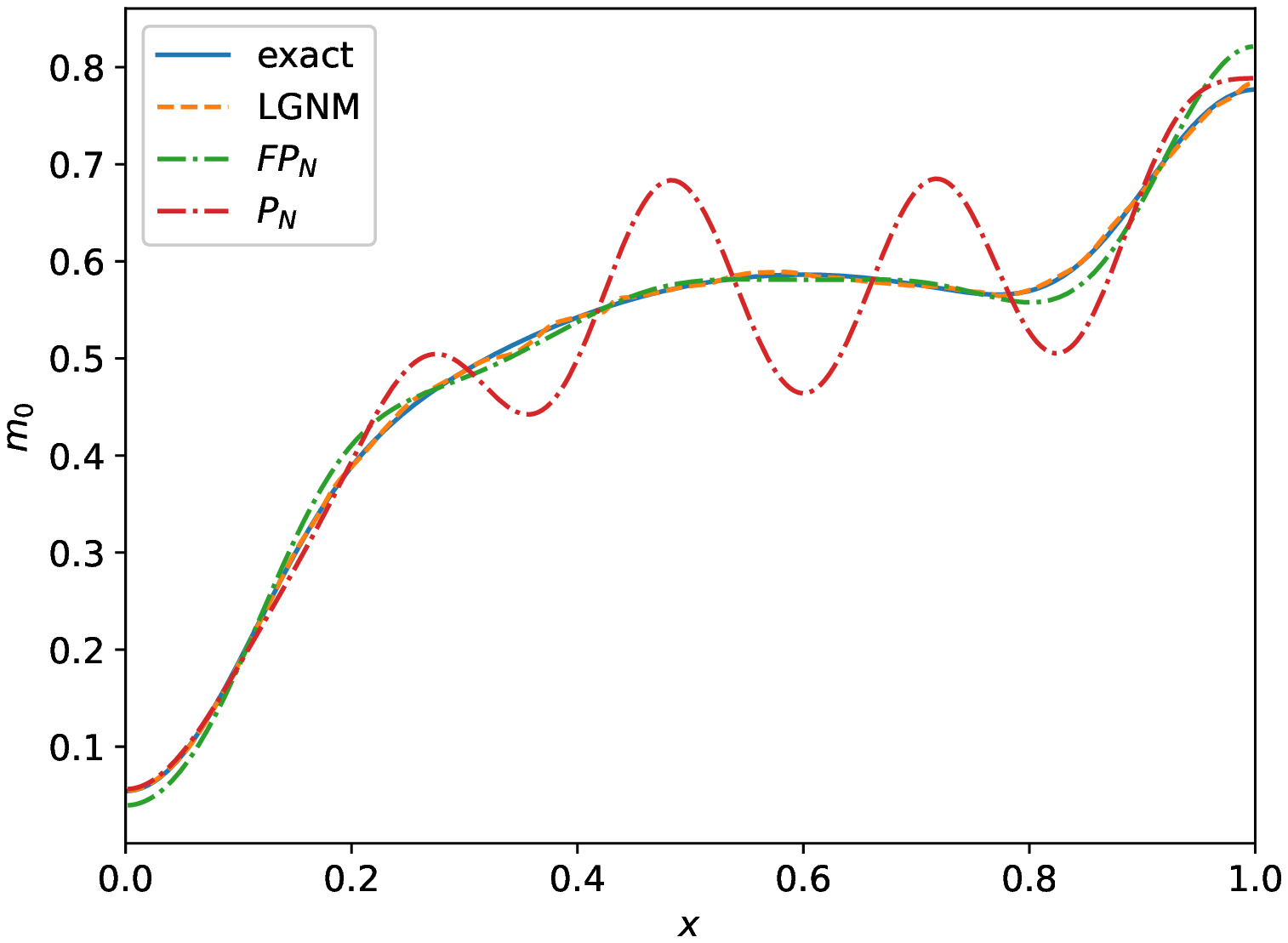}
	    \end{minipage}
	    }
	    \subfigure[$m_1$]{
	    \begin{minipage}[b]{0.46\textwidth}    
	    \includegraphics[width=1\textwidth]{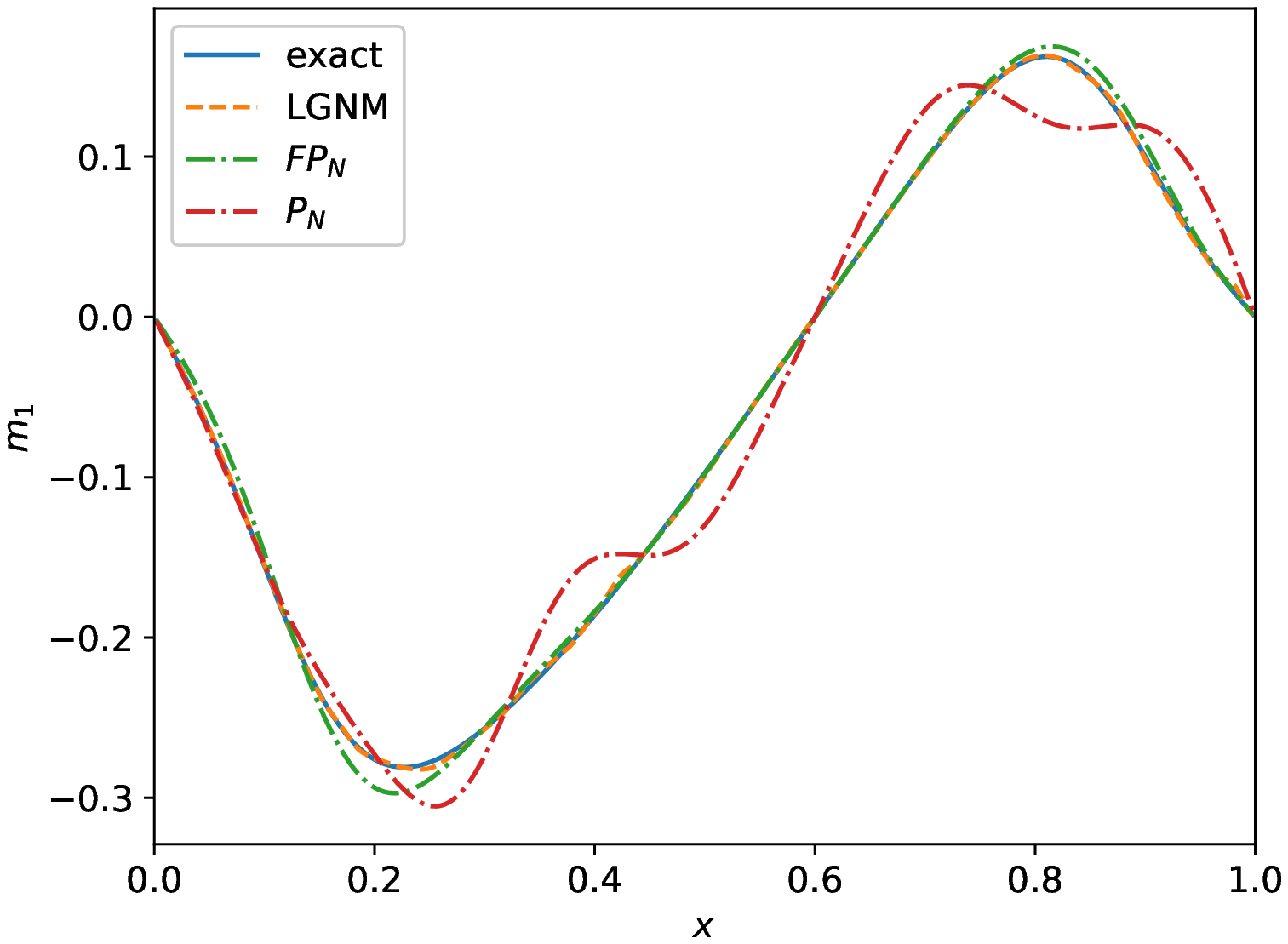}
	    \end{minipage}
	    }
	    \\
	    \subfigure[$m_4$]{
	    \begin{minipage}[b]{0.46\textwidth}
	    \includegraphics[width=1\textwidth]{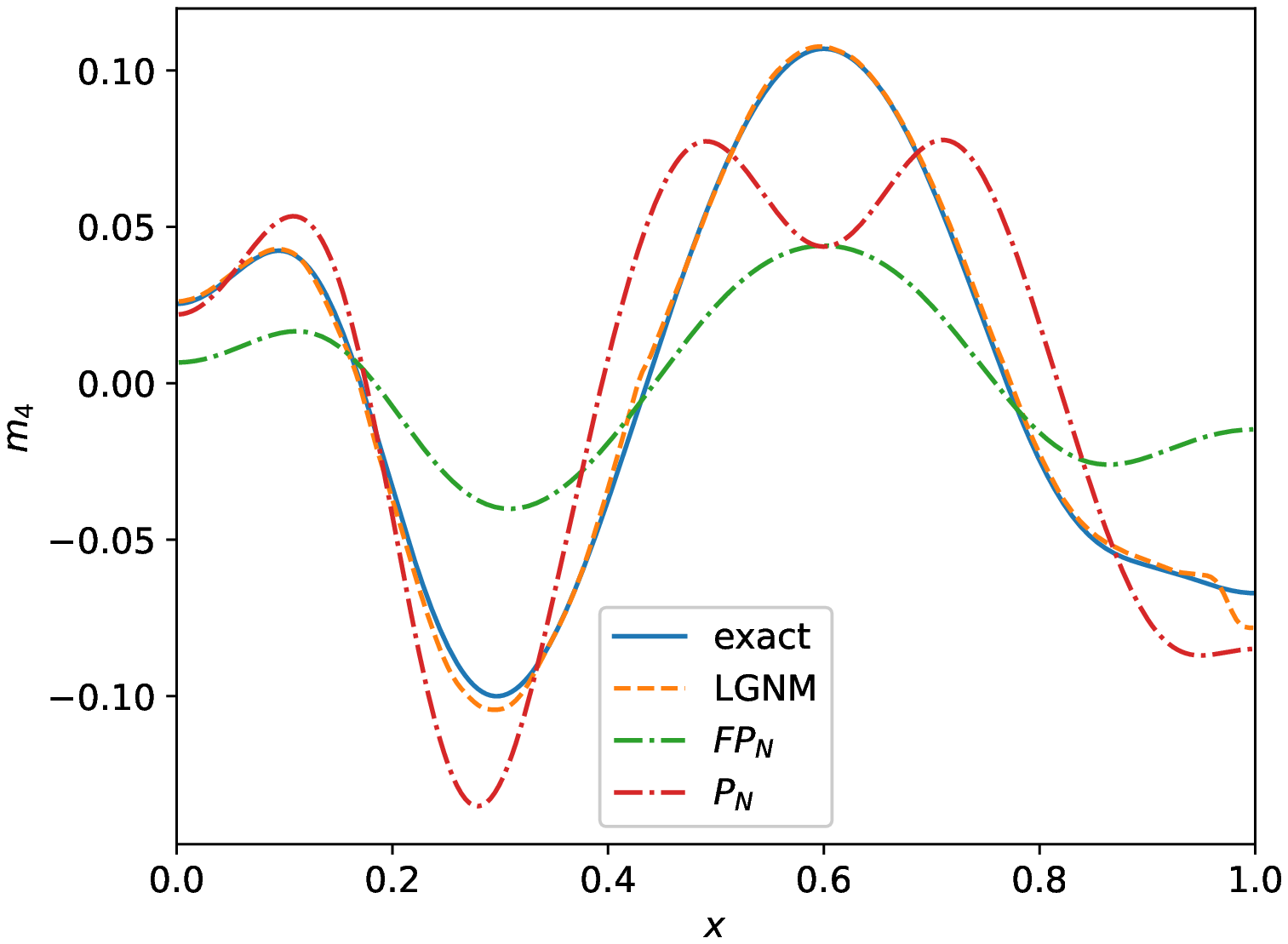}
	    \end{minipage}
	    }
	    \subfigure[$m_5$]{
	    \begin{minipage}[b]{0.46\textwidth}    
	    \includegraphics[width=1\textwidth]{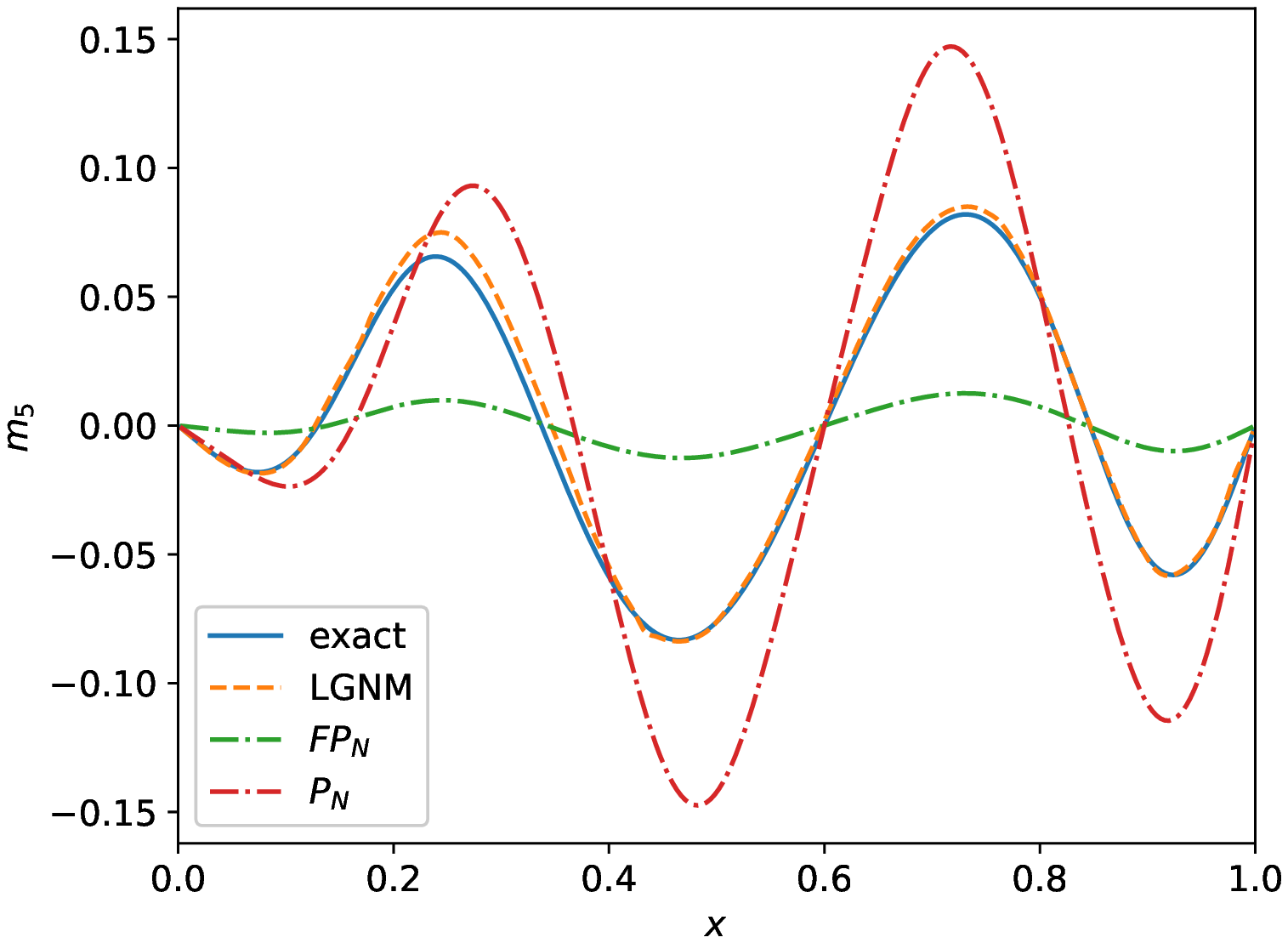}
	    \end{minipage}
	    }	    
	    \caption{{Example \ref{ex:gauss-source}: Gaussian source problem with reflective boundary conditions. Numerical solutions of $m_0$, $m_1$, $m_4$ and $m_5$ at $t=0.5$ with $N=5$ in the optically thin regime ($\sigma_s=\sigma_t=1$).}}
	    \label{fig:gauss-source-reflect-bc}
	\end{figure}

	We also remark that there exist a lot of work on inflow boundary conditions (including vacuum boundary condition as a special case), see e.g. \cite{hauck2011high,bunger2020stable,fan2020nonlinear2}. This remains an open problem, even for analytical moment models. This is not the focus of the current paper and we leave it to our future work.
	}
\end{exam}

{
\begin{exam}[higher wave number test]\label{ex:wave-number}
	To further test the generalizability of our ML closure model, we consider initial conditions across a range of wave numbers that extends outside of the wave
	numbers present in the training data:
	\begin{equation}\label{eq:init-wave-number}
		f_0(x,v) = 2 + \sin(2\pi k x + \phi).
	\end{equation}
	Here $k$ is the wave number of the initial data and $\phi$ is a random number from $[0, 2\pi)$. In the test, we take $k=1,2,\cdots,25$. This is a challenging test, since the magnitudes in the training data in \eqref{eq:init-fourier-series} decay with the wave number, so it is naturally difficult for the well-trained model to capture the correct behavior for the initial condition in \eqref{eq:init-wave-number} with non-decaying magnitudes. We test the optically thin regime, i.e., $\sigma_s=\sigma_t=1$, since the intermediate regime and the optically thick regime are relatively easy to capture for the moment closure model. We run the simulations to time $t=0.4$.

	Figure \ref{fig:wave-number-error} shows the relative $L^2$ error of $m_0$ when using our LGNM closure model with $N=5$. Overall, the method performs well for these waves that are outside of the training data. For low wave numbers ($1\le k\le 10$), the error stays in the magnitude of $10^{-4}$ to $10^{-3}$. For high wave numbers ($11\le k\le 25$), the error slightly increases and saturates in the magnitude of $10^{-3}$ to $10^{-2}$. This shows that our ML closure model generalizes well when the initial conditions are outside of the training data.
	\begin{figure}
	    \centering
	    \includegraphics[width=0.5\textwidth]{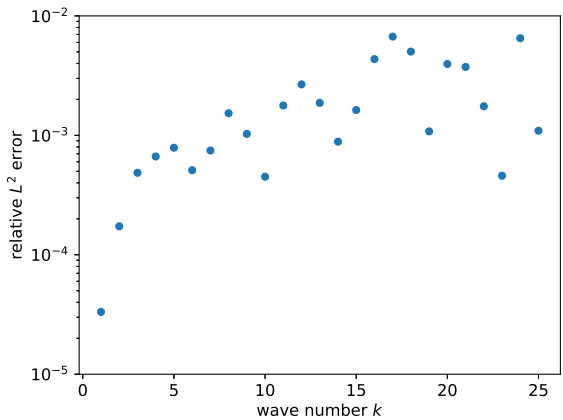}
	    \caption{{Example \ref{ex:wave-number}: higher wave number test. relative $L^2$ error of $m_0$ at $t=0.4$ and optically thin regime $\sigma_s=\sigma_t=1$. The number of moments in the system is $N=5$.}}
	    \label{fig:wave-number-error}
	\end{figure}
\end{exam}
}

\begin{exam}[two-material problem]\label{ex:two-material}
	The two-material problem models a domain with a discontinuous material cross sections \cite{larsen1989asymptotic}.
	In our problem setup, there exist two discontinuities $0<x_1<x_2<1$ in the domain, and $\sigma_s$ and $\sigma_a$ are piecewise constant functions:
	$$ 
	\sigma_s(x)=\left\{
	\begin{aligned}
	& \sigma_{s1}, \quad  ~ x_1 < x < x_2, \\
	& \sigma_{s2}, \quad  ~ 0\le x < x_1 ~ \textrm{or} ~ x_2\le x < 1.
	\end{aligned}
	\right.
	$$
	and
	$$ 
	\sigma_a(x)=\left\{
	\begin{aligned}
	& \sigma_{a1}, \quad  ~ x_1 < x < x_2, \\
	& \sigma_{a2}, \quad  ~ 0\le x < x_1 ~ \textrm{or} ~ x_2\le x < 1.
	\end{aligned}
	\right.
	$$
	Specifically, we take $x_1=0.3$, $x_2=0.7$, $\sigma_{s1}=1$, $\sigma_{s2}=10$ and $\sigma_{a1}=\sigma_{a2}=0$. 

	{	
	In Figure \ref{fig:two-material-compare-N5-FPN} and Figure \ref{fig:two-material-compare-N9-FPN}, we compare our LGNM moment closure with the $P_N$ closure and the $FP_N$ closure for $N=5$ and $N=9$, respectively. The gray background region is in the optically thin regime and the white background region is in the intermediate regime. Here, we tune the parameter $\nu$ in the $FP_N$ closure \eqref{eq:FPN-moment-system} in the range of $[1, 100]$ and find that $\nu=20$ is the optimal parameter for the best performance of $m_0$ in $FP_N$ closure. For both $N=5$ and $N=9$, we observe that the $P_N$ closure has large deviations from the kinetic solution in the optically thin portion of the domain, see the gray part in Figure \ref{fig:two-material-compare-N5-FPN} and Figure \ref{fig:two-material-compare-N9-FPN}. As a comparison, in the ML closure and the $FP_N$ closure, the low order moments $m_0$ and $m_1$ agree well with the kinetic model, see Figure \ref{fig:two-material-compare-N5-FPN} (a) and Figure \ref{fig:two-material-compare-N5-FPN} (b) for $N=5$ and Figure \ref{fig:two-material-compare-N9-FPN} (a) and Figure \ref{fig:two-material-compare-N9-FPN} (b) for $N=9$. For the high order moments, the ML closure behaves slightly better than the $FP_N$ closure in $m_4$ and $m_5$ for $N=5$, see Figure \ref{fig:two-material-compare-N5-FPN} (c) and Figure \ref{fig:two-material-compare-N5-FPN} (d). Moreover, for $N=9$, the ML closure behaves much better than the $FP_N$ closure in the high order moments $m_4$, $m_5$, $m_8$ and $m_9$. This illustrates the potential for better performance of our ML closure over the traditional approaches including the $P_N$ and $FP_N$ closures, especially for the case of capturing the higher order moments.
	}
	\begin{figure}
	    \centering
	    \subfigure[$m_0$]{
	    \begin{minipage}[b]{0.46\textwidth}
	    \includegraphics[width=1\textwidth]{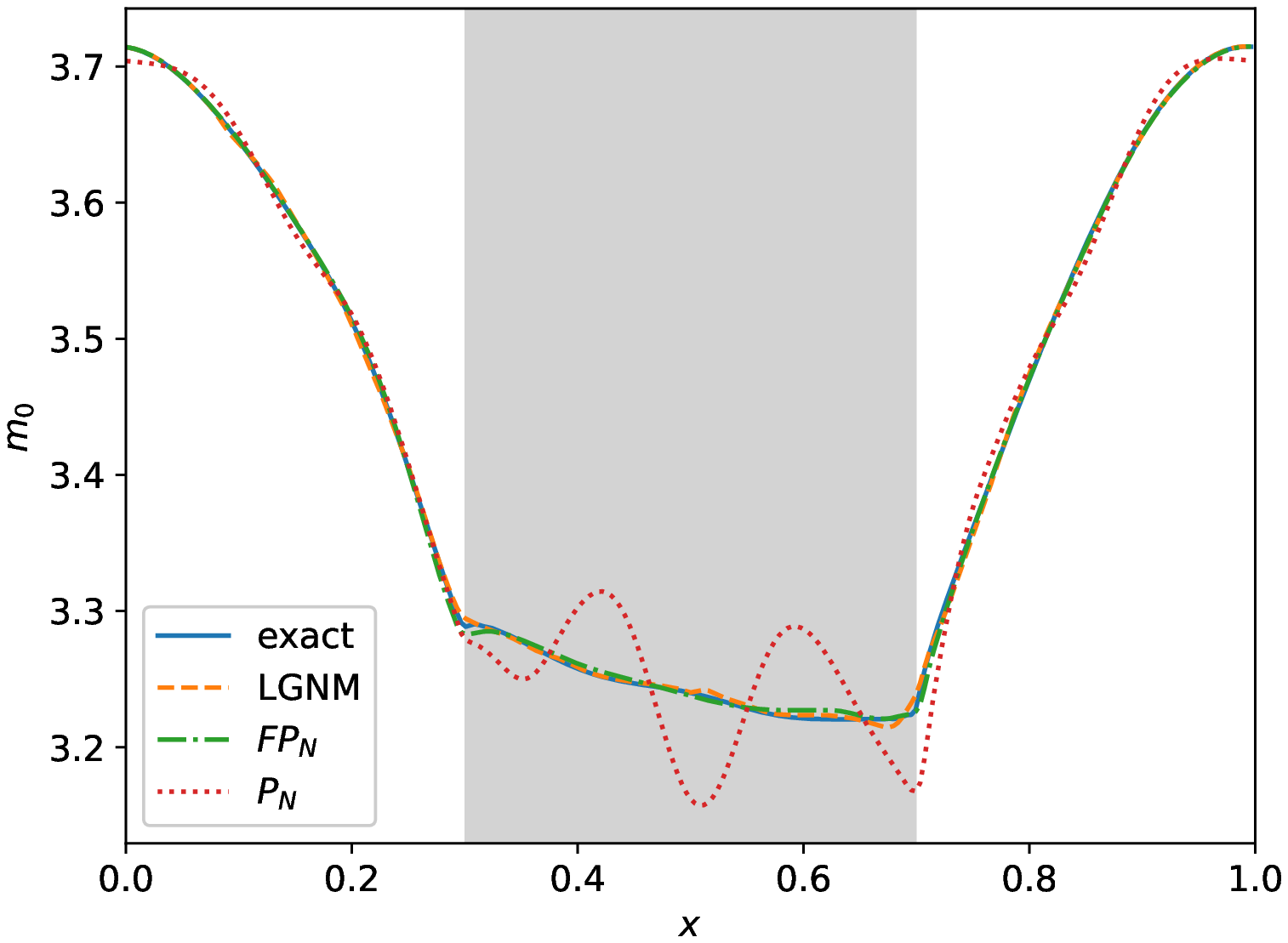}
	    \end{minipage}
	    }
	    \subfigure[$m_1$]{
	    \begin{minipage}[b]{0.46\textwidth}    
	    \includegraphics[width=1\textwidth]{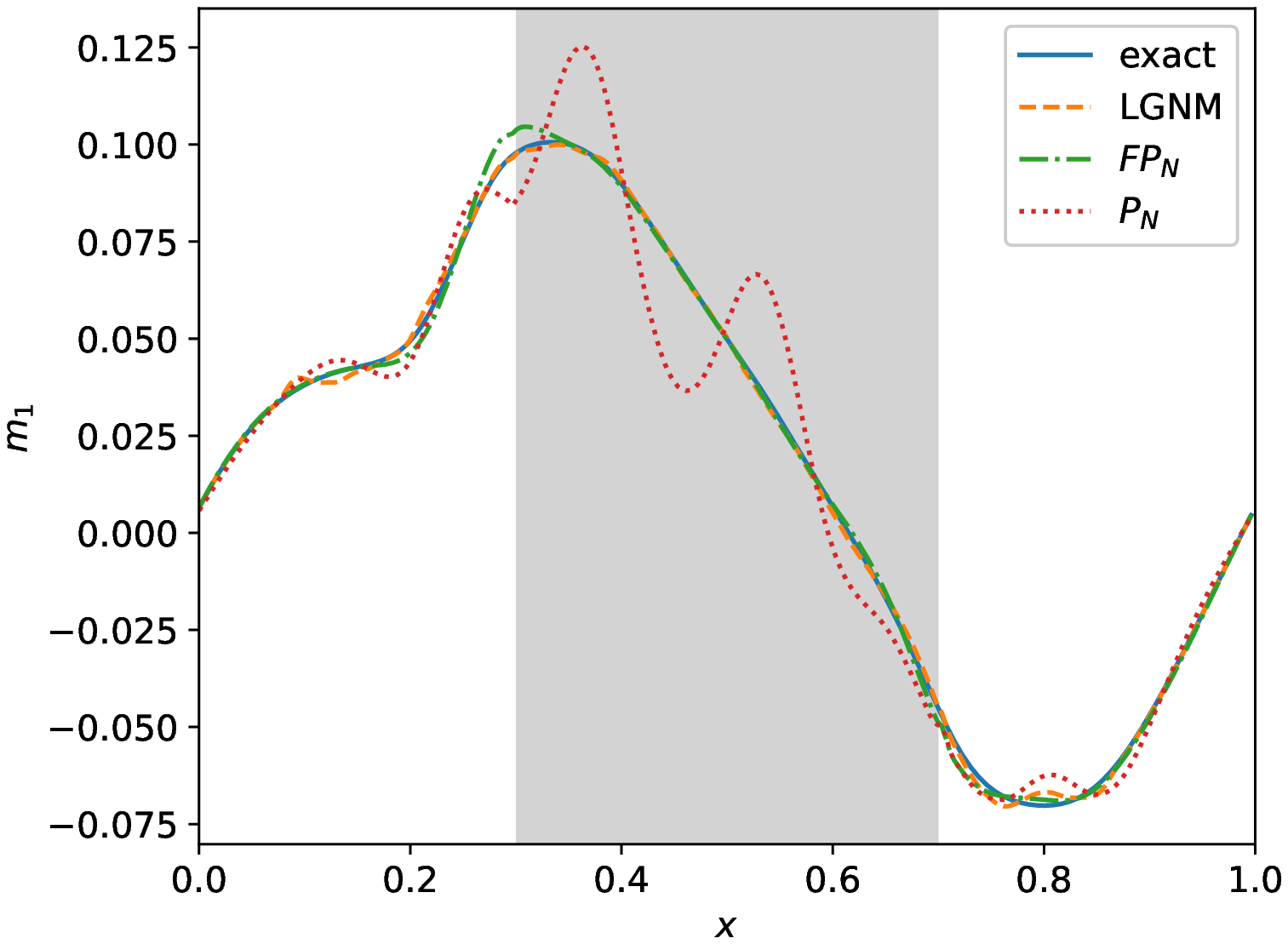}
	    \end{minipage}
	    } 
	    \\
	    \subfigure[$m_4$]{
	    \begin{minipage}[b]{0.46\textwidth}
	    \includegraphics[width=1\textwidth]{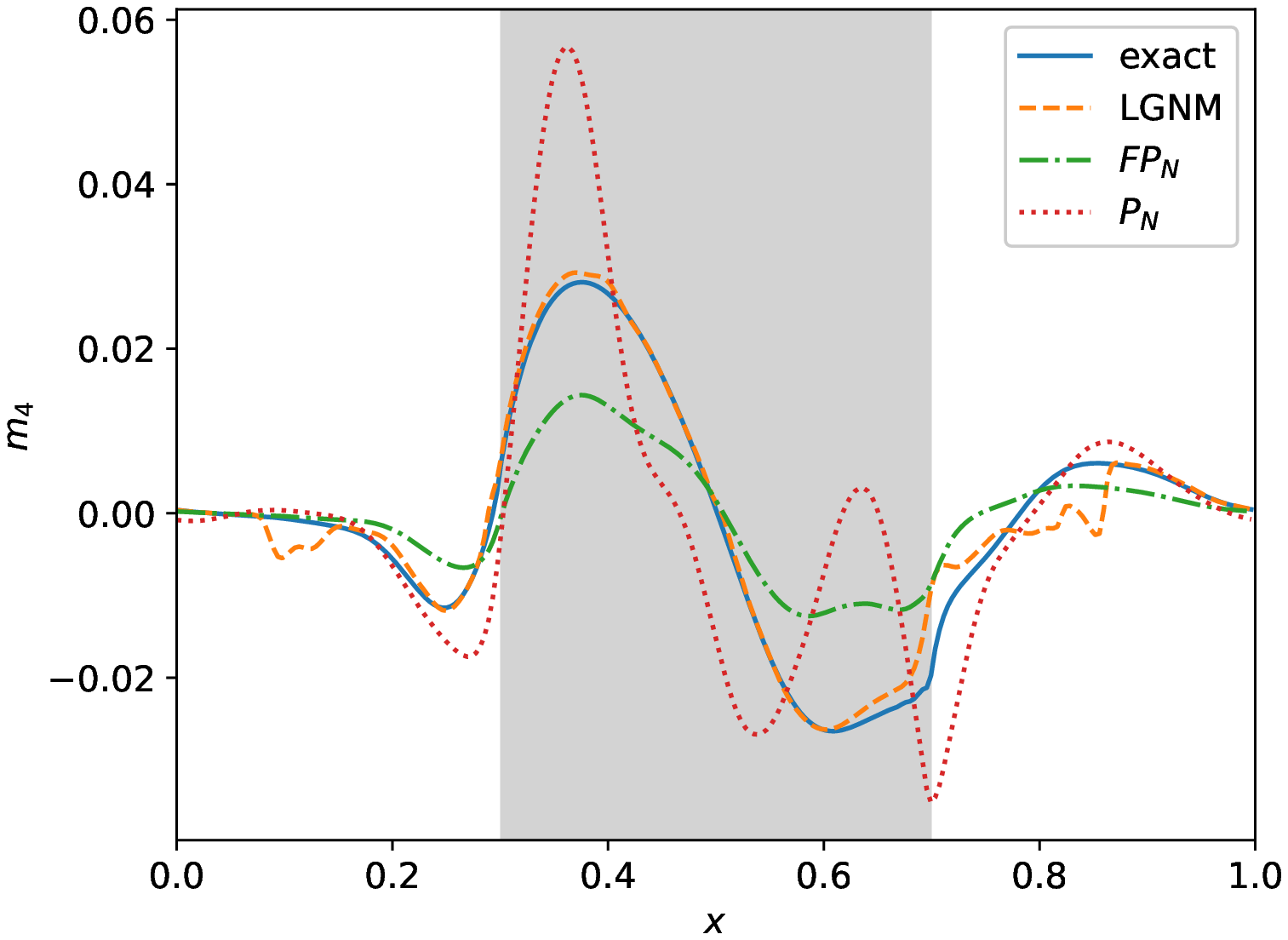}
	    \end{minipage}
	    }
	    \subfigure[$m_5$]{
	    \begin{minipage}[b]{0.46\textwidth}    
	    \includegraphics[width=1\textwidth]{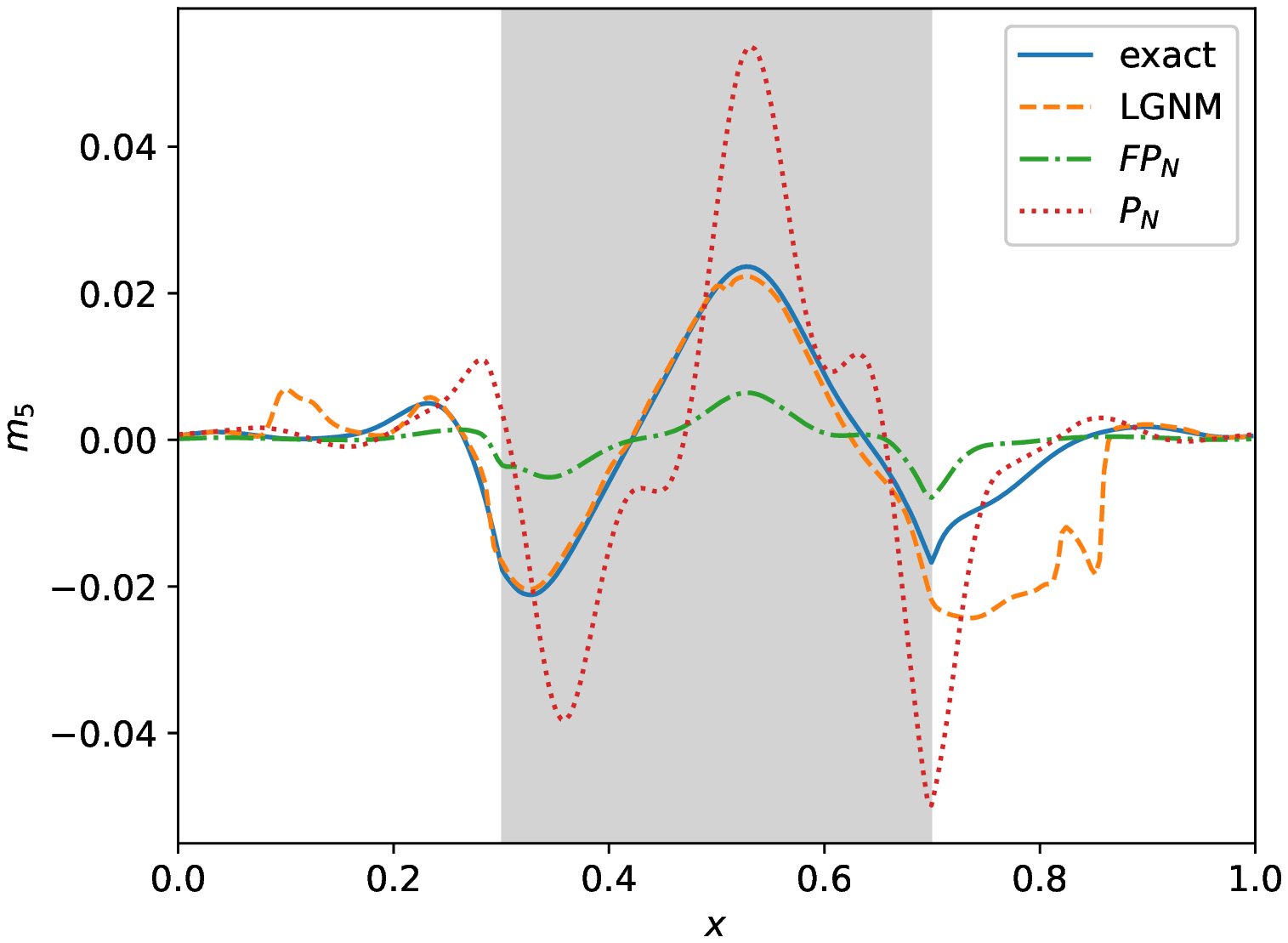}
	    \end{minipage}
	    }	    
	    \caption{{Example \ref{ex:two-material}: two-material problem. Numerical solutions of $m_0$, $m_1$, $m_4$ and $m_5$ at $t=0.4$ with $N=5$. We take $\nu=20$ in the $FP_N$ closure. The gray part in the middle is in the optically thin regime and the other part is in the intermediate regime.}}
	    \label{fig:two-material-compare-N5-FPN}
	\end{figure}

	\begin{figure}
	    \centering
	    \subfigure[$m_0$]{
	    \begin{minipage}[b]{0.46\textwidth}
	    \includegraphics[width=1\textwidth]{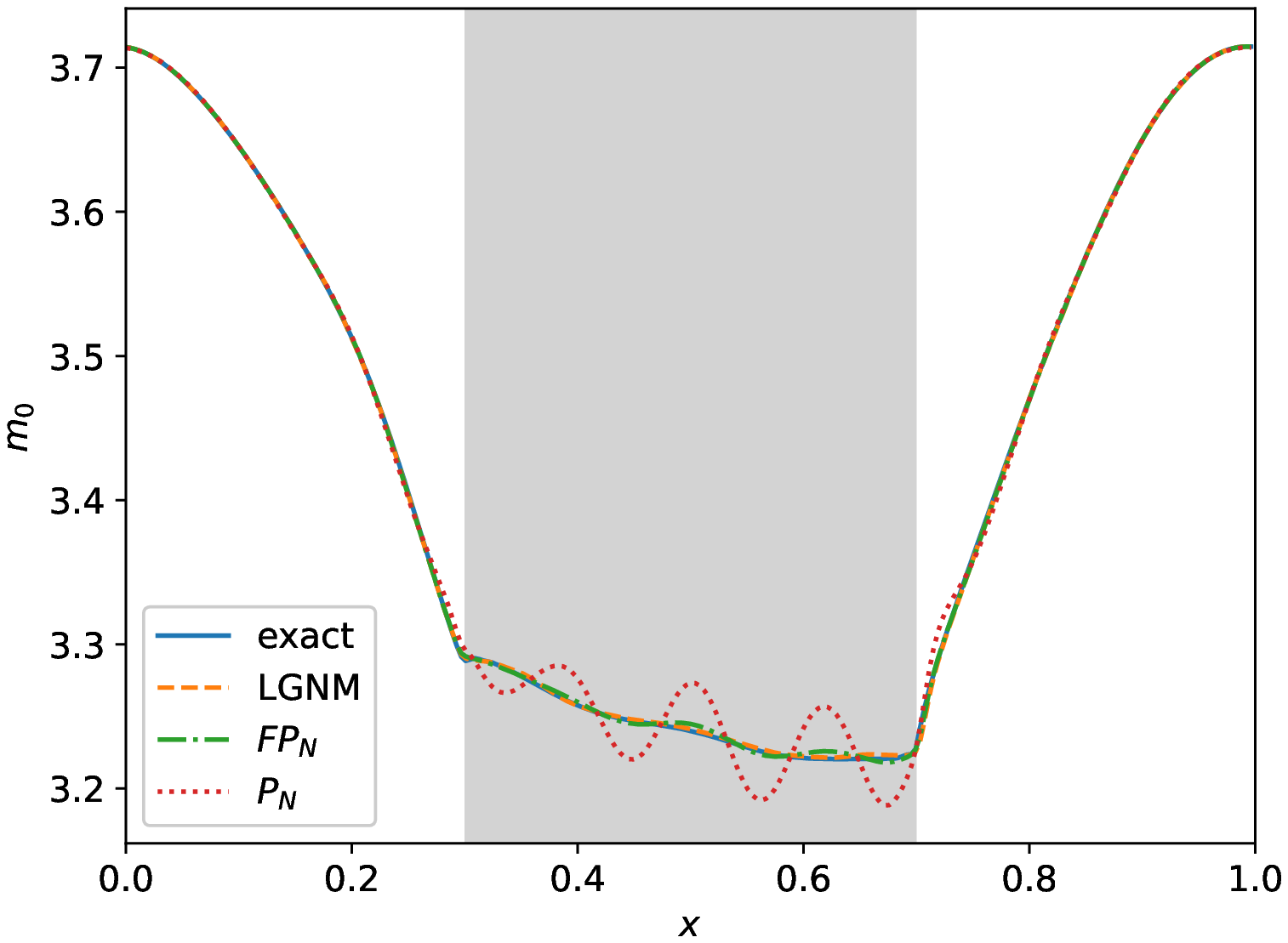}
	    \end{minipage}
	    }
	    \subfigure[$m_1$]{
	    \begin{minipage}[b]{0.46\textwidth}    
	    \includegraphics[width=1\textwidth]{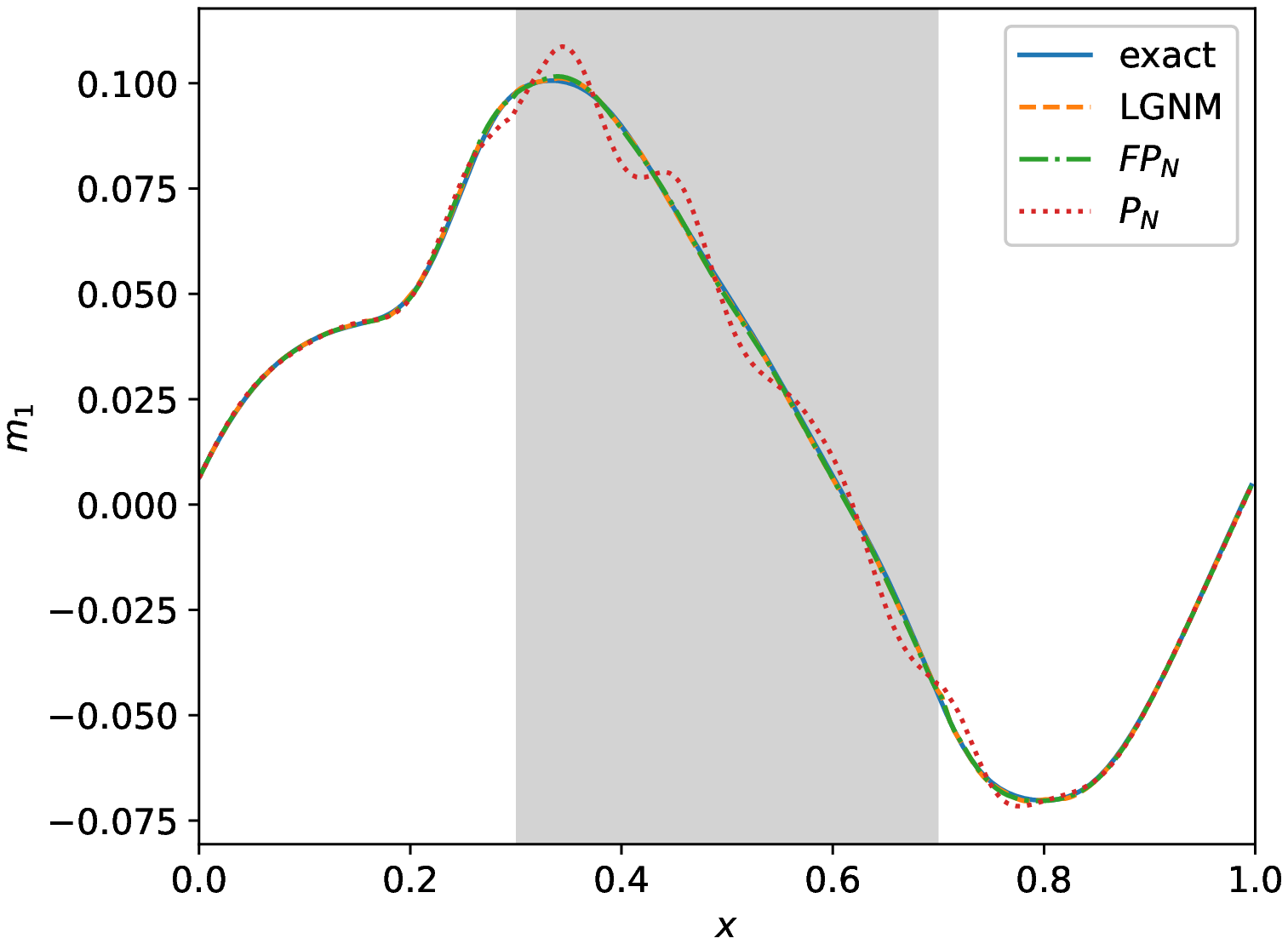}
	    \end{minipage}
	    } 
	    \\
	    \subfigure[$m_4$]{
	    \begin{minipage}[b]{0.46\textwidth}
	    \includegraphics[width=1\textwidth]{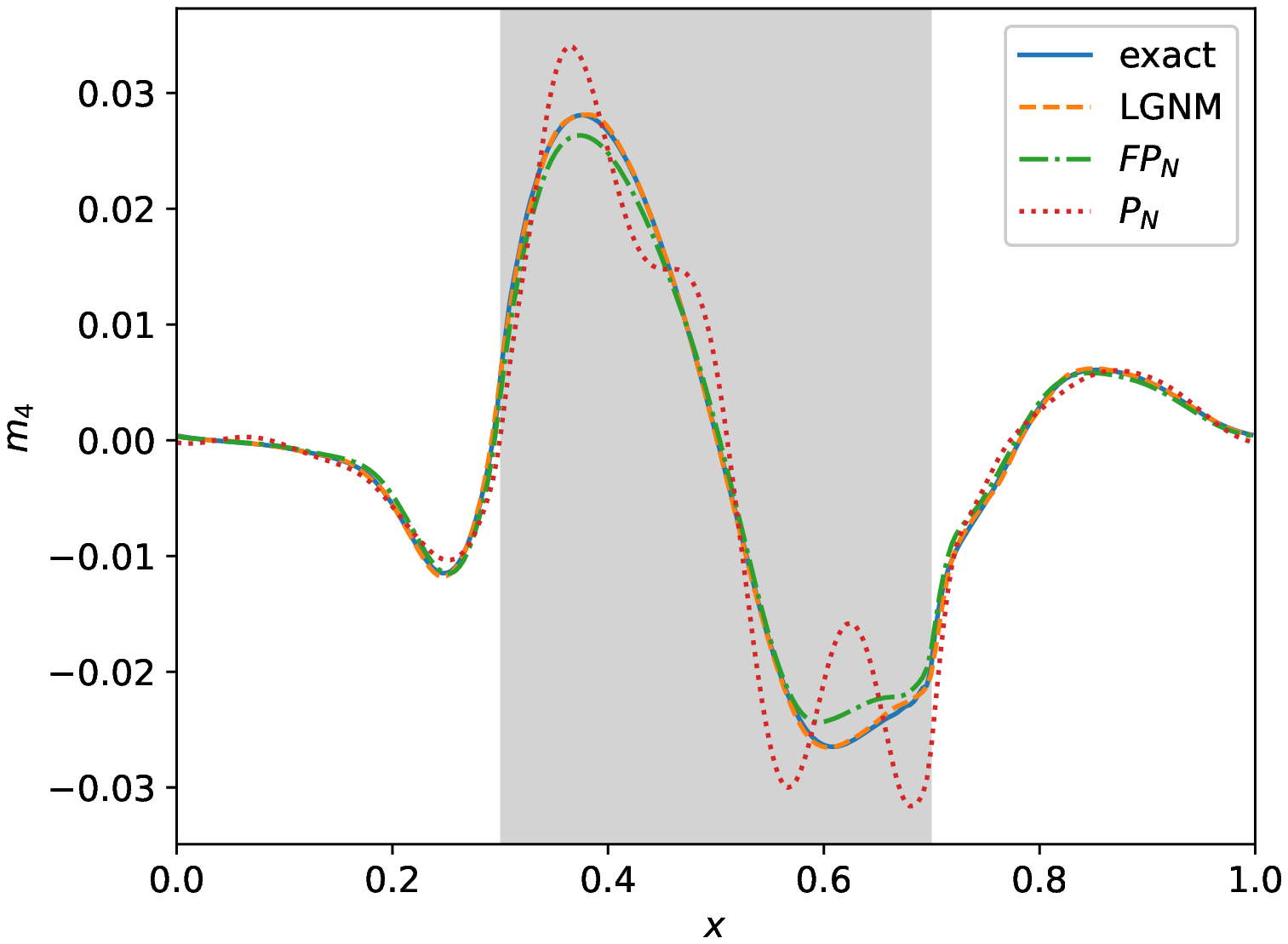}
	    \end{minipage}
	    }
	    \subfigure[$m_5$]{
	    \begin{minipage}[b]{0.46\textwidth}    
	    \includegraphics[width=1\textwidth]{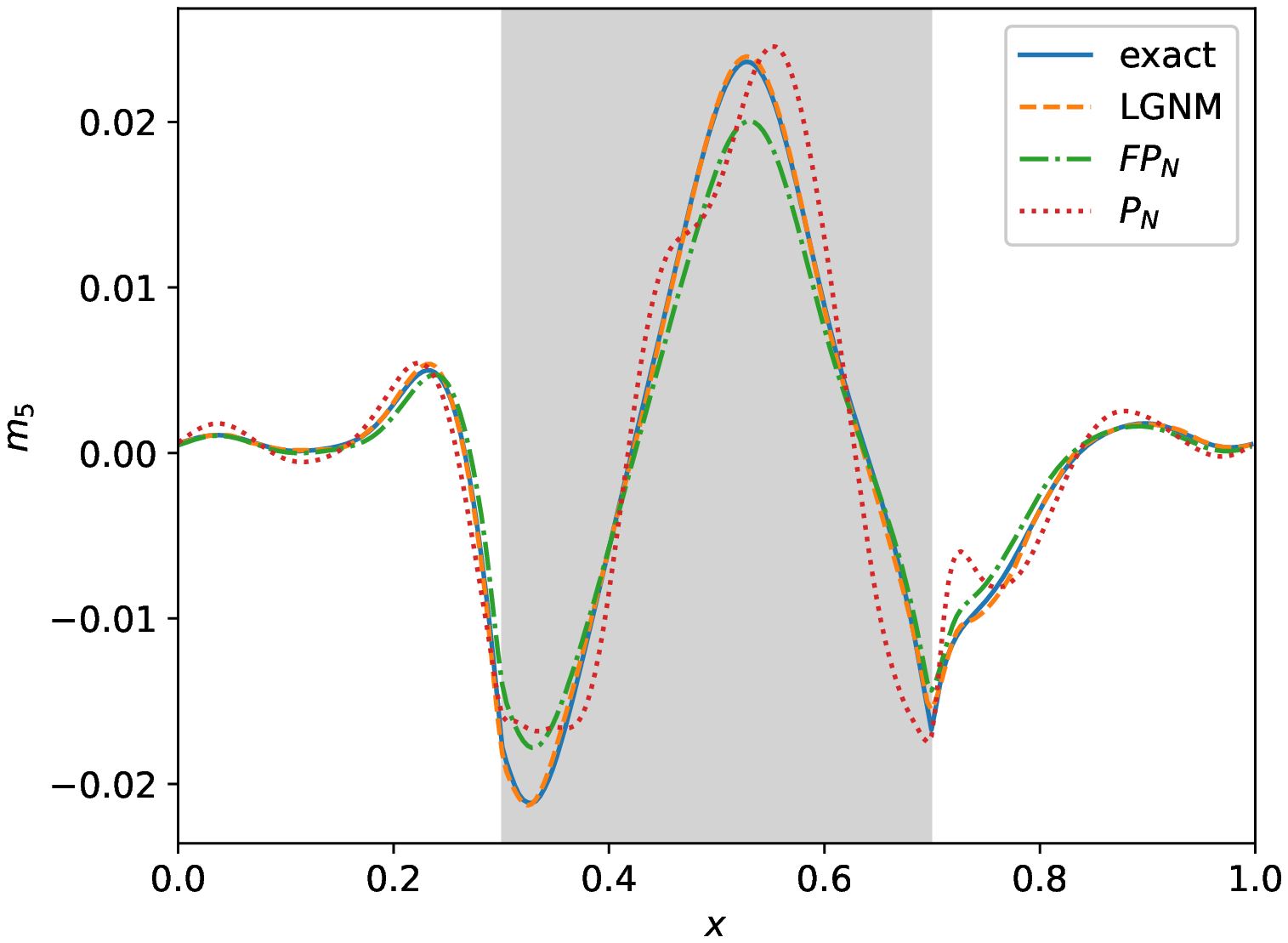}
	    \end{minipage}
	    }	    
	    \\
	    \subfigure[$m_8$]{
	    \begin{minipage}[b]{0.46\textwidth}
	    \includegraphics[width=1\textwidth]{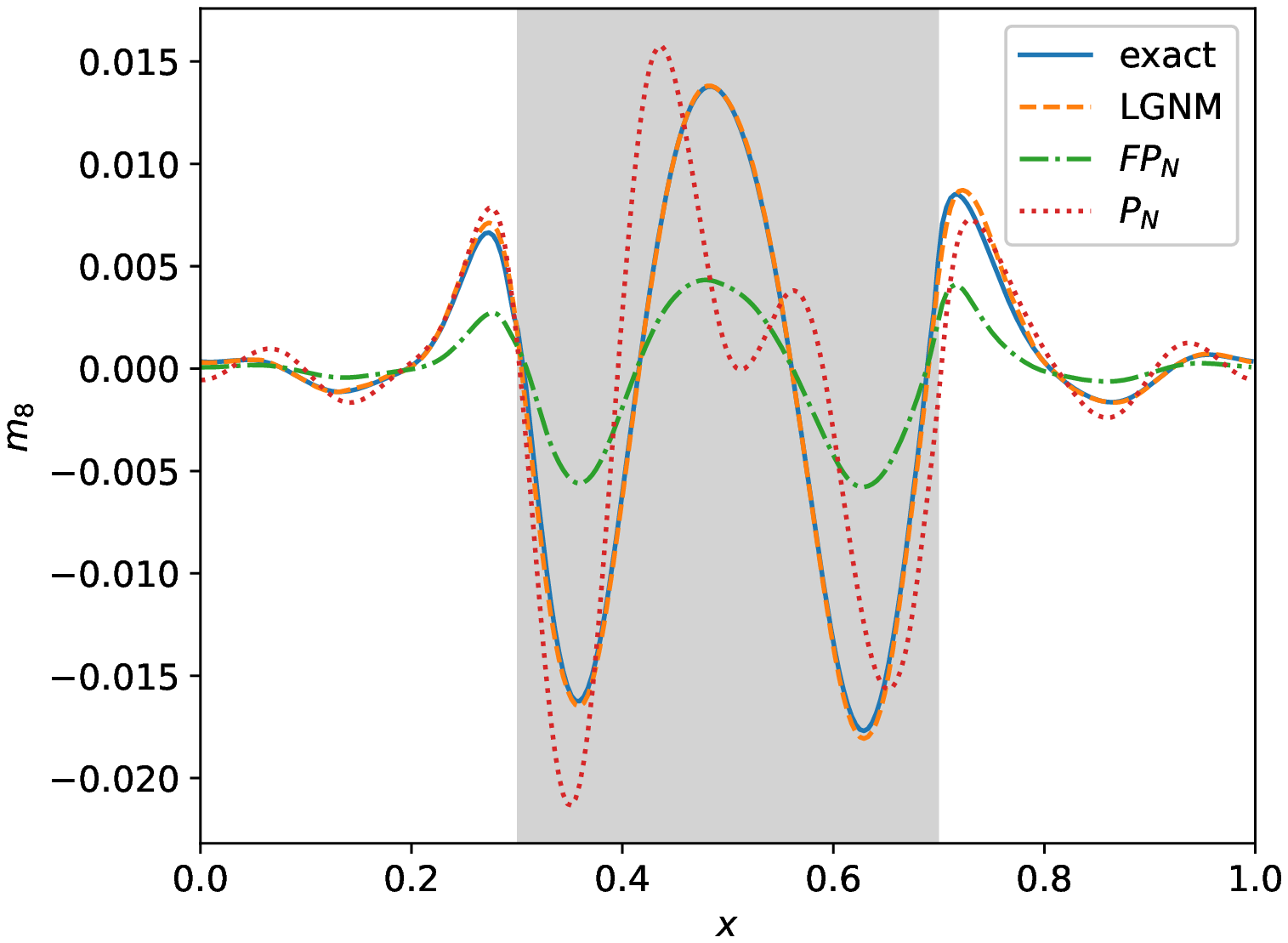}
	    \end{minipage}
	    }
	    \subfigure[$m_9$]{
	    \begin{minipage}[b]{0.46\textwidth}    
	    \includegraphics[width=1\textwidth]{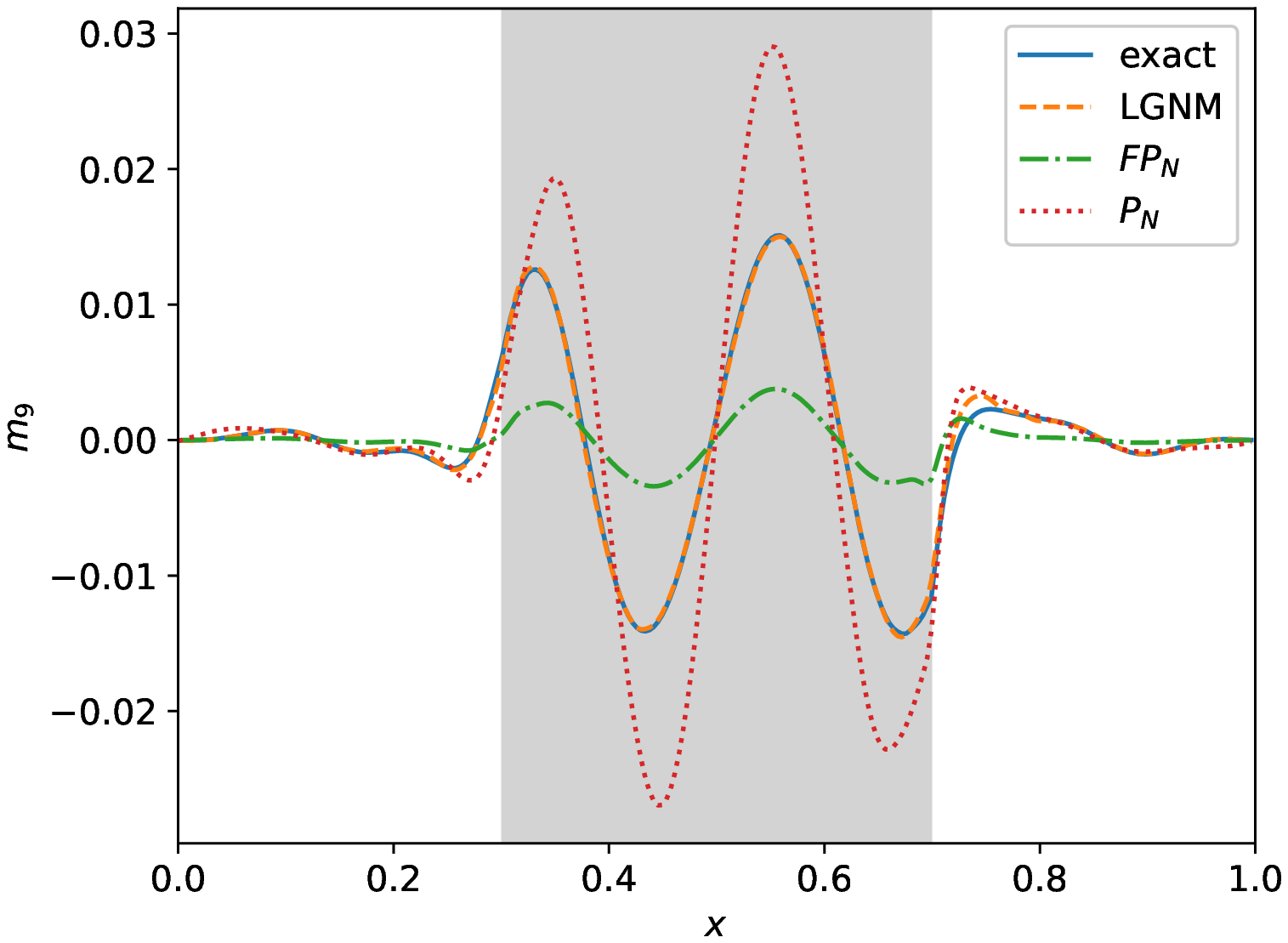}
	    \end{minipage}
	    }	    
	    \caption{{Example \ref{ex:two-material}: two-material problem. Numerical solutions of $m_0$, $m_1$, $m_4$, $m_5$, $m_8$ and $m_9$ at $t=0.4$ with $N=9$. We take $\nu=20$ in the $FP_N$ closure. The gray part in the middle is in the optically thin regime and the other part is in the intermediate regime.}}
	    \label{fig:two-material-compare-N9-FPN}
	\end{figure}

	{
	We further investigate the performance of the ML closure quantitatively. In Table \ref{tb:two-material-error}, we show the relative $L^2$ errors of $m_0$ and $m_N$ moments at $t=0.4$ with $N=5,6,7,8,9$. We first note that all three closure models show convergence with an increasing number of moments. Second, we note that for the zeroth order moment $m_0$, the $FP_N$ closure and the ML closure have similar performance, and both are more accurate than $P_N$ closure. Third, we note that for high order moments, $m_N$, the ML closure is much more accurate than both the $P_N$ and $FP_N$ closures.
	}
	\begin{table}[htbp]
		\centering
		\label{tb:two-material-error}
		\begin{tabular}{c|c|c|c|c|c|c}
			\hline
			 \multirow{2}[4]{*}{$N$} & \multicolumn{3}{c|}{relative $L^2$ error of $m_0$} & \multicolumn{3}{c}{relative $L^2$ error of $m_N$}  \\
			\cline{2-7} & $P_N$ & $FP_N$ &  LGNM & $P_N$ & $FP_N$ &  LGNM  \\
			\hline
			5 &	8.61e-03 & 9.55e-04	& 7.28e-04 & 1.14e+00 &	7.72e-01 & 6.06e-01 \\
			6 &	4.93e-03 & 6.40e-04	& 6.74e-04 & 1.01e+00 &	7.79e-01 & 7.81e-01 \\
			7 & 5.84e-03 & 6.83e-04 & 5.71e-04 & 9.36e-01 & 7.66e-01 & 1.13e-01 \\
			8 & 5.56e-03 & 6.78e-04 & 4.52e-04 & 9.03e-01 & 7.68e-01 & 8.92e-02 \\
			9 & 4.00e-03 & 5.25e-04 & 4.21e-04 & 8.24e-01 & 7.71e-01 & 5.06e-02 \\
			\hline    
		\end{tabular}
		\caption{{Example \ref{ex:two-material}: two-material problem. The relative $L^2$ errors of $m_0$ and $m_N$ at $t=0.4$ with $N=5,6,7,8,9$. We take $\nu=20$ in the $FP_N$ closure.}}
	\end{table}

\end{exam}

\section{Conclusion}\label{sec:conclusion}

In this work, we investigate the moment closure problem for the RTE in slab geometry and learn a closure relation from data. Instead of learning the moment itself, we use neural networks to directly learn its gradient. This new approach is consistent with the exact closure we derive for the free streaming limit and also provides a natural output normalization. Moreover, we incorporate the scale invariance of the closure model into the neural networks, which brings better generalization and performance, especially when applied to initial conditions that have their magnitude outside of the training data. A variety of benchmark tests, including the variable scattering problem, the Gaussian source problem and the two-material problem, were investigated. All tests show that the Learn Gradient ML closure model has both good accuracy as well as that the ML closure model using a simple training procedure has a strong generalization property, i.e., we did not need additional training to maintain accuracy even when applied to problems with discontinuities in the scattering cross section.

{We also remark that the methodology of learning the gradients can be generalized  to the multidimensional case. In this case, we would relate the spatial derivatives in each direction of the higher order moments to the derivatives of the lower order moments in all the spatial directions. We are currently working on this topic and hope to report the progress in the near future.}

Finally, we point out that hyperbolicity is an important property in moment closure models, which is difficult to enforce for traditional closure models \cite{cai2014globally,li2021direct} as well as ML models \cite{huang2020learning}. Our current ML closure model is not able to preserve hyperbolicity and thus have some numerical instabilities. How to incorporate hyperbolicity in the ML closure model is certainly an interesting topic and constitutes our ongoing work \cite{huang2021ml2,huang2021ml3}.

\section*{Acknowledgment}

We thank Michael M. Crockatt in Sandia National Laboratories for providing numerical solver for the radiative transfer equation. We acknowledge the High Performance Computing Center (HPCC) at Michigan State University for providing computational resources that have contributed to the research results reported within this paper. {We thank two anonymous reviewers for providing helpful comments on earlier draft of the manuscript.}

\bibliographystyle{abbrv}

\end{document}